\newcommand\reallywidehat[1]{%
\savestack{\tmpbox}{\stretchto{%
  \scaleto{%
    \scalerel*[\widthof{\ensuremath{#1}}]{\kern-.6pt\bigwedge\kern-.6pt}%
    {\rule[-\textheight/2]{1ex}{\textheight}}
  }{\textheight}%
}{0.5ex}}%
\stackon[1pt]{#1}{\tmpbox}%
}
\newcommand\cyr{%
 \renewcommand\rmdefault{wncyr}%
 \renewcommand\sfdefault{wncyss}%
 \renewcommand\encodingdefault{OT2}%
\normalfont\selectfont} \DeclareTextFontCommand{\textcyr}{\cyr}
\newtheorem{theorem}{Theorem}
\newtheorem{lemma}[theorem]{Lemma}
\newtheorem{corollary}[theorem]{Corollary}
\newtheorem{remark}[theorem]{Remark}
\def\Z{\mathbb Z}
\def\Q{\mathbb Q}
\def\R{\mathbb R}
\def\C{\mathbb C}
\def\F{\mathbb F}
\def\fp{\mathfrak p}
\def\Z{\mathbb Z}
\def\Q{\mathbb Q}
\def\R{\mathbb R}
\def\C{\mathbb C}
\def\F{\mathbb F}
\def\fp{\mathfrak p}
\def\Im{\operatorname{Im}}
\def\Re{\operatorname{Re}}
\def\det{\operatorname{det}}
\def\tr{\operatorname{tr}}
\def\Gal{\operatorname{Gal}}
\def\Frob{\operatorname{Frob}}
\def\id{\operatorname{id}}
\def\mod{\operatorname{mod}}
\def\disc{\operatorname{disc}}
\def\exp{\operatorname{exp}}
\def\gcd{\operatorname{gcd}}
\def\GL{\operatorname{GL}}
\def\li{\operatorname{li}}
\def\o{\operatorname{o}}
\def\log{\operatorname{log}}
\def\ds{\displaystyle}
\def\Tr{\operatorname{Tr}}
\def\Nr{\operatorname{N}}
\def\Im{\operatorname{Im}}
\def\Re{\operatorname{Re}}
\def\det{\operatorname{det}}
\def\tr{\operatorname{tr}}
\def\Gal{\operatorname{Gal}}
\def\Frob{\operatorname{Frob}}
\def\id{\operatorname{id}}
\def\mod{\operatorname{mod}}
\def\disc{\operatorname{disc}}
\def\exp{\operatorname{exp}}
\def\gcd{\operatorname{gcd}}
\def\GL{\operatorname{GL}}
\def\li{\operatorname{li}}
\def\o{\operatorname{o}}
\def\log{\operatorname{log}}
\def\ds{\displaystyle}
\def\Tr{\operatorname{Tr}}
\def\Nr{\operatorname{N}}
\begin{document}

\title{
Quantitative upper bounds related to an isogeny criterion for elliptic curves
}



\author{
Alina Carmen Cojocaru, Auden Hinz, and Tian Wang}
\address[Alina Carmen  Cojocaru]{
\begin{itemize}
\item[-]
Department of Mathematics, Statistics and Computer Science, University of Illinois at Chicago, 851 S Morgan St, 322
SEO, Chicago, 60607, IL, USA
\item[-]
Institute of Mathematics  ``Simion Stoilow'' of the Romanian Academy, 21 Calea Grivitei St, Bucharest, 010702,
Sector 1, Romania
\end{itemize}
} \email[Alina Carmen  Cojocaru]{cojocaru@uic.edu}

\address[Auden Hinz]{
\begin{itemize}
\item[-]
Department of Mathematics, Statistics and Computer Science, University of Illinois at Chicago, 851 S Morgan St, 1313
SEO, Chicago, 60607, IL, USA
\end{itemize}
} \email[Auden Hinz]{audenmh2@uic.edu}

\address[Tian Wang]{
\begin{itemize}
\item[-]
Department of Mathematics, Statistics and Computer Science, University of Illinois at Chicago, 851 S Morgan St, 1222
SEO, Chicago, 60607, IL, USA
\item[-]
Max Planck Institute for Mathematics,
Vivatsgasse 7, 53111 Bonn, Germany
\end{itemize}
} \email[Tian Wang]{twang213@uic.edu; twang@mpim-bonn.mpg.de}

\renewcommand{\thefootnote}{\fnsymbol{footnote}}
\footnotetext{\emph{Key words and phrases:} 
elliptic curves, Frobenius fields, Frobenius traces, Galois representations,  prime density
 }
\renewcommand{\thefootnote}{\arabic{footnote}}

\renewcommand{\thefootnote}{\fnsymbol{footnote}}
\footnotetext{\emph{2010 Mathematics Subject Classification:} 
11G05, 11G20, 11N05 (Primary),  11N80 (Secondary)}
\renewcommand{\thefootnote}{\arabic{footnote}}

\thanks{A.C.C. was partially supported  by a Collaboration Grant for Mathematicians from the Simons Foundation  
under Award No. 709008. }

\thanks{T.W. was partially supported by a University of Illinois at Chicago Dean’s Scholar Fellowship}


\begin{abstract}
For $E_1$ and $E_2$ elliptic curves defined over a number field $K$, without complex multiplication, 
we consider the function
${\cal{F}}_{E_1, E_2}(x)$
counting non-zero prime ideals
$\mathfrak{p}$
of the ring of integers of $K$, of good reduction for $E_1$ and $E_2$, 
of norm at most $x$,
and for which 
the Frobenius fields
$\Q(\pi_{\mathfrak{p}}(E_1))$ and  $\Q(\pi_{\mathfrak{p}}(E_2))$ are equal.
Motivated by an  isogeny criterion of Kulkarni, Patankar, and Rajan,
which states that
$E_1$ and $E_2$ are not potentially isogenous  
 if and only if
 ${\cal{F}}_{E_1, E_2}(x) = \o \left(\frac{x}{\log x}\right)$,
 we investigate the growth in $x$ of ${\cal{F}}_{E_1, E_2}(x)$.
We prove that
if  $E_1$ and $E_2$ are not potentially isogenous,
then
there exist positive constants
$\kappa(E_1, E_2, K)$,
$\kappa'(E_1, E_2, K)$,
and
$\kappa''(E_1, E_2, K)$
such that the following bounds hold:
(i) ${\cal{F}}_{E_1, E_2}(x) < \kappa(E_1, E_2, K) \frac{ x (\log\log x)^{\frac{1}{9}}}{ (\log x)^{\frac{19}{18}}}$;
(ii) ${\cal{F}}_{E_1, E_2}(x) < \kappa'(E_1, E_2, K) \frac{ x^{\frac{6}{7}}}{ (\log x)^{\frac{5}{7}}}$ 
under the Generalized Riemann Hypothesis for Dedekind zeta functions (GRH);
(iii) ${\cal{F}}_{E_1, E_2}(x) < \kappa''(E_1, E_2, K) x^{\frac{2}{3}} (\log x)^{\frac{1}{3}}$
under GRH, 
Artin's Holomorphy Conjecture for the Artin $L$-functions of number field extensions, 
and a  Pair Correlation Conjecture  for the zeros of the Artin $L$-functions of number field extensions. 
\end{abstract}

\maketitle


\section{Introduction}\label{Section-introduction}

Let $K$ be a number field, 
with ${\cal{O}}_K$ denoting its ring of integers 
and
$\overline{K}$ denoting a fixed algebraic closure.
In what follows, 
we use the letter $\mathfrak{p}$ to denote a non-zero prime ideal of ${\cal{O}}_K$ and refer to it as a {\it{prime}} of $K$,
 $\Nr_{K}(\mathfrak{p})$ to denote the norm of  $\mathfrak{p}$,
and 
$\F_{\mathfrak{p}}$ to denote the finite field ${\cal{O}}_K/\mathfrak{p}$.

Let $E_1$ and $E_2$ be 
 elliptic curves over $K$.
 We denote by $N_1$ and  $N_2$
 the norms  of the conductors  
 of $E_1$ and  $E_2$, respectively. 
 For a prime $\mathfrak{p}$ of $K$
 that is of good reduction for both $E_1$ and $E_2$
and
for each index $1 \leq j \leq 2$,  
we
consider 
the polynomial 
$P_{E_j, \mathfrak{p}}(X): = X^2 - a_{\mathfrak{p}}(E_j) X + \Nr_{K}(\mathfrak{p}) \in \Z[X]$,
 where $\Nr_{K}(\mathfrak{p}) + 1 -  a_{\mathfrak{p}}(E_j) $
 is the number  of $\F_{\mathfrak{p}}$-rational points of 
the reduction of $E_j$ modulo $\mathfrak{p}$.
We recall that,
for  any  rational prime  $\ell$ distinct from  the field characteristic of $\F_{\mathfrak{p}}$,
$P_{E_j, \mathfrak{p}}(X)$ is the characteristic polynomial of the image
$\rho_{E_j, \ell}(\Frob_{\mathfrak{p}})$ of a Frobenius element $\Frob_{\mathfrak{p}} \in \Gal \left(\overline{K}/K\right)$
under the $\ell$-adic Galois representation $\rho_{E_j, \ell}$ of $E_j$
defined by the action of $\Gal \left(\overline{K}/K\right)$
on the $\ell$-division points of $E_j\left(\overline{K}\right)$.
Viewing $P_{E_j, \mathfrak{p}}(X)$ in $\C[X]$ and denoting its roots by $\pi_{\mathfrak{p}}(E_j)$ and $\overline{\pi_{\mathfrak{p}}(E_j)}$, 
we recall that $\left| \pi_{\mathfrak{p}}(E_j)\right| = \sqrt{\Nr_{K}(\mathfrak{p})}$, 
which implies that $|a_{\mathfrak{p}}(E_j)| \leq 2 \sqrt{\Nr_{K}(\mathfrak{p})}$
and hence that $\Q(\pi_{\mathfrak{p}}(E_j))$ is either $\Q$ or  an imaginary quadratic field. 
In what follows, 
 we refer to $a_{\mathfrak{p}}(E_j)$ as the {\it{Frobenius trace}} and
to $\Q(\pi_{\fp\mathfrak{}}(E_j))$ as the {\it{Frobenius field}} associated to $E_j$ and $\mathfrak{p}$.

From now on, we assume that
 $E_1$ and $E_2$  are without complex multiplication.  Given a field extension $L$ of $K$, we say that $E_1$ and $E_2$ are  {\emph{$L$-isogenous}}  if there exists an isogeny from $E_1$ to $E_2$, defined over $L$.
We say that $E_1$ and $E_2$ are {\emph{potentially isogenous}}  if there exists a finite extension $L$ of $K$ such that
$E_1$ and $E_2$ are $L$-isogenous.
It is known that the following statements are equivalent:
$E_1$ and $E_2$ are potentially isogenous; 
$E_1$  and $E_2$ are $\overline{K}$-isogenous;
$E_1$ and $E_2$ are $L$-isogenous for some quadratic field extension $L$ of $K$;
either
$E_1$ and $E_2$ are $K$-isogenous, 
or
there exists a 
quadratic character $\chi$ such that 
$E_1$ 
and
the quadratic twist $E_2^{\chi}$ 
are
$K$-isogenous 
(e.g., see \cite[Lemma 3.1, p. 214; proof of Claim 3, p. 215]{LeFNa20}).
Our goal in this paper is to investigate questions arising from a criterion regarding whether $E_1$ and $E_2$
are potentially isogenous,
 as we explain below.

In \cite[Theorem 3, p. 90]{KuPaRa16}, Kulkarni, Patankar, and Rajan show that $E_1$ and $E_2$ are  potentially isogenous  
 if and only if the set of primes 
 $\mathfrak{p}$ of $K$, of good reduction for $E_1$ and $E_2$, such that
  $\Q(\pi_{\mathfrak{p}}(E_1)) = \Q(\pi_{\mathfrak{p}}(E_2))$,
has a positive upper density within the set of  primes of $K$, that is,
the counting function
\begin{equation*}\label{equal-Frob-fields-count}
{\cal{F}}_{E_1, E_2}(x) := 
\#\{
\mathfrak{p}: \Nr_K(\mathfrak{p}) \leq x, \Nr_K(\mathfrak{p}) \nmid N_1 N_2, 
\Q(\pi_{\mathfrak{p}}(E_1)) = \Q(\pi_{\mathfrak{p}}(E_2))
\}
\end{equation*}
satisfies
\begin{equation*}\label{equal-Frob-fields-ud}
\ds\limsup_{x \rightarrow \infty}
\frac{{\cal{F}}_{E_1, E_2}(x) }{\#\{\mathfrak{p}: \Nr_K(\mathfrak{p}) \leq x\}} > 0.
\end{equation*} 
Thus,
$E_1$ and $E_2$ are not potentially isogenous  
 if and only if
 ${\cal{F}}_{E_1, E_2}(x) = \o \left(\frac{x}{\log x}\right)$.
 
In relation to the above result, in \cite[Conjecture 1, p. 91]{KuPaRa16},
Kulkarni, Patankar, and Rajan
mention the following conjecture:
{\emph{
$E_1$ and $E_2$ are not potentially isogenous  if and only if there exists a positive constant $c(E_1, E_2, K)$,
 which  may depend on $E_1$, $E_2$,  and $K$,
 such that, for any sufficiently large $x$,
${\cal{F}}_{E_1, E_2}(x) < c(E_1, E_2, K) \frac{x^{\frac{1}{2}}}{\log x}$. 
}}
While the ``if" implication follows from the aforementioned result of Kulkarni, Patankar, and Rajan,
the ``only if" implication remains open and 
motivates the investigation of the growth of the function ${\cal{F}}_{E_1, E_2}(x)$.

In \cite[p. 1174]{CoFoMu05}, the authors record the following remark of Serre,
highlighting only the main idea of proof:  
if $E_1$ and $E_2$  are not potentially isogenous, then, under the Generalized Riemann Hypothesis for Dedekind zeta functions,
there exists a positive constant $c'(E_1, E_2, K)$, which depends on $E_1$, $E_2$,  and $K$,
such that, for any sufficiently large $x$,
\begin{eqnarray*}
&&
\#\{\mathfrak{p} \ \text{degree one prime}: \Nr_{K}(\mathfrak{p}) \leq x,  \Nr_K(\mathfrak{p}) \nmid N_1 N_2, 
\Q(\pi_{\mathfrak{p}}(E_1)) = \Q(\pi_{\mathfrak{p}}(E_2)) \not\in \{\Q(i), \Q(i \sqrt{3})\}\}
\\
&&
\hspace*{6cm}
 \leq c'(E_1, E_2, K) x^{\frac{11}{12}}.
\end{eqnarray*}
In \cite[Theorem 2, p. 43]{BaPa18}, Baier and Patankar address the growth of ${\cal{F}}_{E_1, E_2}(x)$ 
in the case $K = \Q$
and prove that,
under the Generalized Riemann Hypothesis for Dedekind zeta functions,
there exists a positive constant $c''(E_1, E_2)$, which depends on $E_1$ and $E_2$, 
such that, for any sufficiently large $x$,
$${\cal{F}}_{E_1, E_2}(x) < c''(E_1, E_2) x^{\frac{29}{30}} (\log x)^{\frac{1}{15}}.$$
In  \cite[Theorem 3, p. 43]{BaPa18}, Baier and Patankar also prove the following unconditional bound for
${\cal{F}}_{E_1, E_2}(x)$, resulting from an unconditional variation of the proof of their conditional result:
there exists a positive constant $c'''(E_1, E_2)$, which depends on $E_1$ and $E_2$, 
such that, for any sufficiently large $x$,
$${\cal{F}}_{E_1, E_2}(x) < c'''(E_1, E_2) \frac{x (\log \log x)^{\frac{22}{21}}}{(\log x)^{\frac{43}{42}}}.$$
The argument
highlighted
by Serre in \cite[p. 1174]{CoFoMu05}
 is based on a direct application of a conditional upper bound version of the Chebotarev density theorem
in the setting of an infinite 
Galois extension of $K$ defined by the $\ell$-adic Galois representations of $E_1$ and $E_2$, for a suitably chosen rational prime $\ell$. 
The proofs given by Baier and Patankar  in \cite{BaPa18} are based on indirect applications of conditional and unconditional effective  asymptotic versions of the Chebotarev density theorem, via the square sieve,
in the setting of a finite  Galois extension of $\Q$ defined by the residual modulo $\ell_1 \ell_2$  Galois representations of $E_1$ and $E_2$, for distinct suitably chosen rational primes $\ell_1$ and $\ell_2$.

The main goal of this paper is
to improve the current  upper bounds for ${\cal{F}}_{E_1, E_2}(x)$, as follows:
 unconditionally;
 under the Generalized Riemann Hypothesis for Dedekind zeta functions;
under the Generalized Riemann Hypothesis for Dedekind zeta functions,
Artin's Holomorphy Conjecture for the Artin $L$-functions of number field extensions, 
and
a  Pair Correlation Conjecture  regarding the zeros of the Artin $L$-functions of number field extensions. 
We shall refer to these latter hypotheses as GRH, AHC, and PCC,
and state them explicitly in the notation part of Section \ref{Section-introduction}.

 \begin{theorem}\label{theorem1}
Let $E_1$ and $E_2$ be elliptic curves over a number field $K$, 
  without complex multiplication,
  and not potentially isogenous.
 Denote by $N_1$ and $N_2$
 the norms of the conductors  of $E_1$ and $E_2$, respectively.
\begin{enumerate}
\item[(i)]
There exists a positive constant $\kappa(E_1, E_2, K)$, which depends on $E_1$, $E_2$, and $K$,
such that, 
for any sufficiently large $x$,
$${\cal{F}}_{E_1, E_2}(x) < \kappa(E_1, E_2, K) \frac{ x (\log\log x)^{\frac{1}{9}}}{ (\log x)^{\frac{19}{18}}}.$$
\item[(ii)]
If GRH  holds, then
there exists a positive constant $\kappa'(E_1, E_2, K)$, which depends on $E_1$, $E_2$, and $K$, 
 such that,
for any sufficiently large $x$,
$${\cal{F}}_{E_1, E_2}(x) < \kappa'(E_1, E_2, K) \frac{ x^{\frac{6}{7}}}{ (\log x)^{\frac{5}{7}}}.$$
\item[(iii)]
 If GRH, AHC, and PCC  hold, then
there exists a positive constant $\kappa''(E_1, E_2, K)$, which depends on $E_1$, $E_2$, and $K$, 
 such that,
for any sufficiently large $x$,
$${\cal{F}}_{E_1, E_2}(x) < \kappa''(E_1, E_2, K) x^{\frac{2}{3}} (\log x)^{\frac{1}{3}}.$$
\end{enumerate}
\end{theorem}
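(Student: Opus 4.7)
The plan is to combine Heath--Brown's square sieve with effective forms of the Chebotarev density theorem of three increasing strengths, one for each part of the theorem. The starting point is the observation that, for a prime $\mathfrak{p}$ of good reduction, of residue degree one, and with $a_{\mathfrak{p}}(E_j)^2 \neq 4\Nr_K(\mathfrak{p})$ for $j = 1, 2$, the two Frobenius fields are both imaginary quadratic, and $\Q(\pi_{\mathfrak{p}}(E_1)) = \Q(\pi_{\mathfrak{p}}(E_2))$ if and only if
\[
M(\mathfrak{p}) \ := \ \bigl(a_{\mathfrak{p}}(E_1)^2 - 4\Nr_K(\mathfrak{p})\bigr)\bigl(a_{\mathfrak{p}}(E_2)^2 - 4\Nr_K(\mathfrak{p})\bigr)
\]
is a perfect square in $\Z_{\geq 0}$. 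The excluded primes (degree $\geq 2$, bad reduction, supersingular for either $E_j$) contribute negligibly, being absorbed into the error terms of each of (i)--(iii). The task thus reduces to estimating $\#\{\mathfrak{p} : \Nr_K(\mathfrak{p}) \leq x,\ M(\mathfrak{p}) \text{ is a positive square}\}$.

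I would then choose a parameter $L = L(x) \to \infty$, set $\mathcal{L} := \{\ell \text{ prime}: L < \ell \leq 2L,\ \ell \nmid N_1 N_2\}$, and apply Heath--Brown's square sieve to obtain
\[
\mathcal{F}_{E_1, E_2}(x) \ll \frac{\pi_K(x)}{|\mathcal{L}|} + \frac{1}{|\mathcal{L}|^2}\sum_{\substack{\ell_1, \ell_2 \in \mathcal{L} \\ \ell_1 \neq \ell_2}}\Biggl|\sum_{\Nr_K(\mathfrak{p}) \leq x}\Bigl(\tfrac{M(\mathfrak{p})}{\ell_1 \ell_2}\Bigr)\Biggr| + (\text{lower-order}).
\]
For fixed distinct $\ell_1, \ell_2 \in \mathcal{L}$, the Jacobi symbol $\bigl(\tfrac{a_\mathfrak{p}(E_j)^2 - 4\Nr_K(\mathfrak{p})}{\ell_j}\bigr)$ is determined, via quadratic reciprocity, by $\Nr_K(\mathfrak{p}) \bmod \ell_j$ together with the conjugacy class of $\rho_{E_j, \ell_j}(\Frob_\mathfrak{p})$ in $\GL_2(\F_{\ell_j})$. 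Hence the inner sum becomes a character-weighted count of Frobenius classes in the finite Galois group $G_{\ell_1 \ell_2} := \Gal(K(E_1[\ell_1 \ell_2], E_2[\ell_1 \ell_2], \zeta_{\ell_1 \ell_2})/K)$. Invoking Serre's open image theorem for the product $E_1 \times E_2$, the image of Galois in $\GL_2(\F_{\ell_1 \ell_2}) \times \GL_2(\F_{\ell_1 \ell_2})$ has index bounded independently of $\ell_1, \ell_2$, and the relevant class function on $G_{\ell_1 \ell_2}$ has vanishing mean for all sufficiently large $\ell_1, \ell_2$ (this is where the hypotheses that $E_1, E_2$ are non-CM and not potentially isogenous are essential).

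The inner character sum is then controlled by decomposing the class function into irreducible characters of $G_{\ell_1\ell_2}$ and applying an effective Chebotarev density theorem to $K_{\ell_1 \ell_2} := K(E_1[\ell_1\ell_2], E_2[\ell_1\ell_2], \zeta_{\ell_1\ell_2})$, which satisfies $[K_{\ell_1\ell_2} : K] \ll \ell_1^4 \ell_2^4$ and $\log |d_{K_{\ell_1 \ell_2}}| \ll [K_{\ell_1\ell_2}:\Q] \log(\ell_1 \ell_2 N_1 N_2)$. For (i), the unconditional Lagarias--Odlyzko bound restricts $L$ to a small power of $\log x$, and optimization yields the $x(\log\log x)^{1/9}(\log x)^{-19/18}$ bound. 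For (ii), the GRH-conditional Lagarias--Odlyzko bound, with error $\ll x^{1/2}(\log|d_{K_{\ell_1\ell_2}}| + [K_{\ell_1\ell_2}:\Q]\log x)$, permits $L$ of polynomial size in $x$, and balancing yields $x^{6/7}(\log x)^{-5/7}$. For (iii), the sharper Chebotarev bound of M.\,R.\ Murty--V.\,K.\ Murty--Saradha under GRH, AHC, and PCC, in which the degree factor $[K_{\ell_1\ell_2}:\Q]$ is effectively replaced by $[K_{\ell_1\ell_2}:\Q]^{1/2}$, allows a still larger $L$ and yields (iii).

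The principal technical obstacle is the uniform control, in $\ell_1$ and $\ell_2$, of (a) the quantitative form of Serre's open image theorem for $E_1 \times E_2$, ensuring that the class function has zero mean and that bounded-index corrections do not interfere with the sieve's main--versus--error balance, and (b) the propagation of conductor--discriminant estimates through the three Chebotarev error terms. The precise exponents $\tfrac{19}{18}$, $\tfrac{6}{7}$, and $\tfrac{2}{3}$ arise from balancing $L$ against these errors in each regime.
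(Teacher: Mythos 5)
Your proposal follows the square-sieve strategy of Cojocaru--Fouvry--Murty and Baier--Patankar: detect $\Q(\pi_{\mathfrak{p}}(E_1)) = \Q(\pi_{\mathfrak{p}}(E_2))$ via the positivity and squareness of $M(\mathfrak{p}) = (a_{\mathfrak{p}}(E_1)^2 - 4\Nr_K(\mathfrak{p}))(a_{\mathfrak{p}}(E_2)^2 - 4\Nr_K(\mathfrak{p}))$, sieve by Jacobi symbols modulo a pair $\ell_1\ell_2$, and feed the resulting character sums into effective Chebotarev. This is genuinely different from the paper's argument, which first proves (Lemma~\ref{Frob-fields-to-traces}) that for almost all primes the Frobenius-field equality is equivalent to the \emph{linear} condition $|a_{\mathfrak{p}}(E_1)| = |a_{\mathfrak{p}}(E_2)|$, i.e.\ $a_{\mathfrak{p}}(E_1) \pm a_{\mathfrak{p}}(E_2) = 0$, and then bounds the two linear counts ${\cal{T}}_{E_1,E_2}^{1,1}(x)$ and ${\cal{T}}_{E_1,E_2}^{1,-1}(x)$ (Theorem~\ref{theorem3}) by a \emph{direct} application of effective Chebotarev at a single well-chosen prime $\ell$, working inside the group $G(\ell) = \{(M_1,M_2)\in\GL_2(\F_\ell)^2: \det M_1 = \det M_2\}$ and passing to the quotients by $\Lambda(\ell)$ (parts (i),(iii)) or to the Borel/unipotent quotient together with the maximum-over-$\ell$ lemma and Zywina's refined bound (part (ii)). The exceptional primes with Frobenius field $\Q(i)$ or $\Q(i\sqrt{3})$ are then handled separately by single-curve Lang--Trotter bounds.

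The gap in your argument is that the square sieve, as you have set it up, does not deliver the stated exponents, and you offer no computation showing that it does. The paper cites Baier--Patankar, whose careful optimization of exactly this sieve (in the case $K=\Q$) yields $x^{29/30}(\log x)^{1/15}$ under GRH and $x(\log\log x)^{22/21}(\log x)^{-43/42}$ unconditionally — both strictly weaker than the bounds $x^{6/7}(\log x)^{-5/7}$ and $x(\log\log x)^{1/9}(\log x)^{-19/18}$ you claim. The bottleneck is structural: the square sieve forces you to work in the compositum $K(E_1[\ell_1\ell_2], E_2[\ell_1\ell_2], \zeta_{\ell_1\ell_2})$, whose degree over $K$ is of order $(\ell_1\ell_2)^7$ (your stated $\ell_1^4\ell_2^4$ omits the second curve), and this inflated degree feeds into every Chebotarev error term. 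The paper's linearization via Lemma~\ref{Frob-fields-to-traces} lets one work modulo a \emph{single} prime $\ell$ in a field of much smaller degree (and, for part (ii), in a Borel-type subextension of degree $O(\ell^2)$ after the reduction to a maximum over $\ell$), which is precisely what buys the improved exponents. Without a new idea to collapse the sieve's two-prime field to something comparable in size, merely ``balancing $L$'' will reproduce the Baier--Patankar exponents, not the theorem's. A secondary issue: for part (iii) you attribute the PCC-strengthened Chebotarev theorem to Murty--Murty--Saradha, but that is Murty--Murty--Wong; Murty--Murty--Saradha gives the $|C|^{1/2}$ refinement under GRH and AHC alone, without PCC.
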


\begin{remark}\label{multiplicity-one}
{\emph{
Theorem \ref{theorem1} may be viewed under the general theme of
 strong multiplicity one results, such as those proven in  \cite{JaSh76}, \cite{MuPu17}, \cite{Ra94}, \cite{Ra00}, \cite{Wa14}, and \cite{Wo22}.
 In particular,  the methods developed in   \cite{MuPu17} and \cite{Wo22}
 are  applicable to bounding ${\cal{F}}_{E_1, E_2}(x)$ from above
  in the case $K = \Q$ and under hypotheses different from ours.
 Specifically, 
 letting $E_1$ and $E_2$ be elliptic curves over $\Q$, 
 without complex multiplication,
 not potentially isogenous, 
and 
assuming the Generalized Riemann Hypothesis 
for the Rankin-Selberg L-functions
associated to the symmetric power L-functions of $E_1$ and $E_2$,
the methods of  \cite{MuPu17} lead to 
${\cal{F}}_{E_1, E_2}(x) \leq \kappa_2(E_1, E_2) \frac{x^{\frac{7}{8}}}{(\log x)^{\frac{1}{2}}}$
(see \cite[Remark (ii), p. 567]{Wo22}),
while the methods of \cite{Wo22} lead to 
${\cal{F}}_{E_1, E_2}(x) \leq \kappa_3(E_1, E_2) \frac{x^{\frac{5}{6}}}{(\log x)^{\frac{1}{3}}}$ 
(see \cite[Theorem 1.11, p. 566]{Wo22}),
where
$\kappa_2(E_1, E_2)$
and
$\kappa_3(E_1, E_2)$
are positive constants that depend on $E_1$ and $E_2$.
It is not obvious if these methods generalize easily to tackle the case $K \neq \Q$ of
Theorem \ref{theorem1}
or to tackle the case $(\alpha_1, \alpha_2) \neq (\pm 1, \pm 1)$
of 
Theorem \ref{theorem3}.
}}
\end{remark}

An immediate application of Theorem \ref{theorem1} is another proof of the aforementioned isogeny criterion of Kulkarni, Patankar, and Rajan 
(see Section \ref{Section-isogeny-criterion}).

The proof of Theorem \ref{theorem1} relies on 
 upper bounds related to the Lang-Trotter Conjecture for Frobenius fields of one elliptic curve. We formulate the relevant results here for the  convenience of the reader.  
 Let $E/\Q$ be an elliptic curve  without complex multiplication and let $F$ be an imaginary quadratic field. 
    Lang and Trotter \cite{LaTr76} conjectured the asymptotic
    \begin{equation*}\label{Lang-Trotter Function}
   \pi_{E, F}(x) :=
    \#\{p \leq x: p\nmid N_E,  \Q(\pi_p(E))\simeq F\}  \sim C(E, F) \frac{x^{\frac{1}{2}}}{\log x},
    \end{equation*}
     where $C(E, F)$ is an explicit constant depending on $E$ and $F$.
  Zywina  \cite[Theorem 1.3, p. 236]{Zy15}  proved that unconditionally,   
  \[
  \pi_{E, F}(x)\leq \kappa_1(E, F)\frac{x(\log\log x)^2}{(\log x)^2},
  \]
 and that under GRH,
    \[
  \pi_{E, F}(x)\leq \kappa_1'(E, F)\frac{x^{\frac{4}{5}}}{(\log x)^{\frac{3}{5}}}.
  \]
Murty, Murty, and Wong \cite[Corollary 1.6, p. 406]{MuMuWo18} proved that under GRH, AHC, and PCC, 
    \[
  \pi_{E, F}(x)\leq \kappa_1''(E, F)\frac{x^{\frac{2}{3}}}{(\log x)^{\frac{1}{2}}}.
  \]
The proof  of Theorem \ref{theorem1} also relies on  the following  result which relates to the generalization of the
Lang-Trotter Conjecture on Frobenius traces formulated by  Chen, Jones, and Serban in \cite{ChJoSe22}.


\begin{theorem}\label{theorem3}
Let $E_1$ and $E_2$ be elliptic curves over a number field $K$, 
  without complex multiplication,
  and not potentially isogenous.
 Denote by $N_1$ and $N_2$
 the norms of the conductors  of $E_1$ and $E_2$, respectively.
 Let $\alpha_1$ and $\alpha_2$ be coprime integers, not both zero.
For $x > 0$, set
\begin{equation*}\label{Frob-traces-plus}
{\cal{T}}_{E_1, E_2}^{\alpha_1,\alpha_2}(x) 
:=
\#\{\fp: \Nr_K(\fp) \leq x, \gcd(\Nr_{K}(\fp), 6 N_1 N_2)=1,   \alpha_1a_\fp(E_1) + \alpha_2a_\fp(E_2) = 0\}.
\
\end{equation*}
\begin{enumerate}
\item[(i)]
There exists a positive constant $\kappa_0(E_1, E_2, K, \alpha_1, \alpha_2)$,
which depends on $E_1$, $E_2$,  $K$, $\alpha_1$, and $\alpha_2$,
such that, 
for any sufficiently large $x$,
$${\cal{T}}_{E_1, E_2}^{\alpha_1,\alpha_2}(x) < \kappa_0(E_1,E_2, K, \alpha_1, \alpha_2) \frac{ x (\log\log x)^{\frac{1}{9}}}{ (\log x)^{\frac{19}{18}}}.
$$
\item[(ii)]
If GRH  holds,  
then
there exists a positive constant $\kappa_0'(E_1, E_2, K, \alpha_1, \alpha_2)$,
which depends on $E_1$, $E_2$,  $K$, $\alpha_1$, and $\alpha_2$,
 such that, 
for any sufficiently large $x$,
$${\cal{T}}_{E_1, E_2}^{\alpha_1,\alpha_2}(x) < \kappa_0'(E_1, E_2, K, \alpha_1, \alpha_2) \frac{ x^{\frac{6}{7}}}{ (\log x)^{\frac{5}{7}}}.$$
\item[(iii)]
If GRH, AHC, and PCC hold,  
then
there exists a positive constant $\kappa_0''(E_1, E_2, K, \alpha_1, \alpha_2)$, 
which depends on $E_1$, $E_2$,   $K$, $\alpha_1$, and $\alpha_2$,
 such that, 
for any sufficiently large $x$,
$${\cal{T}}_{E_1, E_2}^{\alpha_1,\alpha_2}(x) < \kappa_0''(E_1, E_2, K, \alpha_1, \alpha_2)  x^{\frac{2}{3}} (\log x)^{\frac{1}{3}}.$$
\end{enumerate}
\end{theorem}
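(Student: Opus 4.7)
The plan is to adapt the treatment of the one-elliptic-curve Lang--Trotter problem for Frobenius fields, due to Zywina for the unconditional and GRH bounds and to Murty--Murty--Wong for the GRH+AHC+PCC bound, to the two-curve linear condition $\alpha_1 a_\fp(E_1) + \alpha_2 a_\fp(E_2) = 0$. The main mechanism is effective Chebotarev applied to the extension $L_\ell := K(E_1[\ell], E_2[\ell])$ of $K$, for a suitably chosen auxiliary parameter $\ell$ that is optimized in terms of $x$.

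The Galois-theoretic setup is as follows. Let $\rho_\ell = \rho_{E_1, \ell} \times \rho_{E_2, \ell} : \Gal(\overline{K}/K) \to \GL_2(\F_\ell) \times \GL_2(\F_\ell)$, with image $G_\ell$. Since $E_1$ and $E_2$ are non-CM and not potentially isogenous, Faltings' isogeny theorem combined with Serre's open image theorem implies that $G_\ell$ equals $H_\ell := \{(g_1, g_2) \in \GL_2(\F_\ell) \times \GL_2(\F_\ell) : \det g_1 = \det g_2\}$ for all primes $\ell$ outside a finite set depending on $E_1$, $E_2$, and $K$. For such $\ell$, one has $|G_\ell| \asymp \ell^7$, $[L_\ell : \Q] \ll_K \ell^7$, and $\log|\disc L_\ell / \Q| \ll_{E_1, E_2, K} \ell^7 \log \ell$. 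The condition $\alpha_1 a_\fp(E_1) + \alpha_2 a_\fp(E_2) = 0$ forces $\Frob_\fp$ into the union of conjugacy classes
$$\mathcal{A}_\ell := \{(g_1, g_2) \in G_\ell : \alpha_1 \tr g_1 + \alpha_2 \tr g_2 \equiv 0 \pmod \ell\}.$$
A direct combinatorial argument using the surjectivity of the trace pair $(\tr g_1, \tr g_2) : H_\ell \to \F_\ell^2$, combined with the coprimality of $\alpha_1$ and $\alpha_2$, yields $|\mathcal{A}_\ell|/|G_\ell| = 1/\ell + O(1/\ell^2)$.

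One then substitutes for $\pi_{\mathcal{A}_\ell}(x, L_\ell/K)$ the appropriate effective Chebotarev bound: the unconditional Lagarias--Odlyzko form in case (i), the GRH-conditional bound of Lagarias--Odlyzko--Serre in case (ii), and the Murty--Murty--Wong bound under GRH, AHC, and PCC in case (iii). Each such bound consists of a main term of order $x/(\ell \log x)$ together with an error that grows polynomially in $\ell$ through $|G_\ell|$, $[L_\ell:\Q]$, and $\log|\disc L_\ell/\Q|$. Optimizing $\ell$ as a function of $x$ in each regime produces the three claimed bounds, tracking exactly the optimization carried out in the one-curve case but with the exponent shift coming from $|G_\ell| \asymp \ell^7$ instead of $\ell^4$. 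In the unconditional case (i), the Heath-Brown square sieve applied as in Baier--Patankar with two auxiliary primes $\ell_1, \ell_2$ supplies the additional $(\log\log x)^{1/9}/(\log x)^{1/18}$ refinement beyond the trivial $x/\log x$.

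The main obstacle is the careful propagation of the enlarged Galois group size $|G_\ell| \asymp \ell^7$ (versus $\asymp \ell^4$ in the one-curve case) through the Chebotarev error terms in each of the three regimes; this inflation accounts for the gap between the exponents obtained in Theorem \ref{theorem3} and the corresponding exponents in the one-curve Lang--Trotter bounds recalled just before the statement. A secondary difficulty is the uniform handling of the finitely many exceptional primes $\ell$ at which $G_\ell \subsetneq H_\ell$; their contribution to ${\cal{T}}_{E_1, E_2}^{\alpha_1,\alpha_2}(x)$ can be absorbed into the implied constant via Serre's uniform open-image bounds on $\rho_\ell$ and Goursat's lemma.
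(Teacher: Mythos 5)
Your high-level framework—pass to the residual mod $\ell$ representation of $E_1\times E_2$, identify the image as $G(\ell)=\{(M_1,M_2):\det M_1=\det M_2\}$ for large $\ell$ (via Lombardo's effective open image theorem, which is the reference the paper cites), translate the trace condition into a Chebotarev condition, and optimize $\ell$ against $x$—is the right starting point and matches the paper. But applying effective Chebotarev \emph{directly} to the full extension $K(A[\ell])/K$ of degree $\asymp\ell^7$ does not yield the claimed exponents, and the specific machinery the paper deploys to get around this is absent from your outline.

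Concretely: for parts (i) and (iii) the paper does not work in $G(\ell)$ at all. It first quotients by the scalar subgroup $\Lambda(\ell)$, passes to $P(\ell)=G(\ell)/\Lambda(\ell)$ of size $\asymp\ell^6$ with only $\O(\ell^2)$ conjugacy classes, and uses Serre's comparison of $\pi_{\mathcal{C}}$ in an extension with $\pi_{\widehat{\mathcal{C}}}$ in a subextension (\cite[Prop.~7--8]{Se81}) to transfer the count. Running Lagarias--Odlyzko--Serre unconditionally, respectively Murty--Murty--Wong, on $K(A[\ell])^{\Lambda(\ell)}/K$ rather than on $K(A[\ell])/K$ is precisely what produces the exponents $\tfrac{19}{18}$ and $\tfrac{2}{3}$; a naive application to the degree-$\ell^7$ field, as you describe, loses this and lands around $x^{3/4}$ under GRH+AHC+PCC. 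For part (ii) the paper's argument is structurally different from yours: it first uses a Murty--Murty--Saradha-type ``max lemma'' (Lemma 9 of \cite{CoWa23}, restated as Lemma~\ref{max-lemma} here) to replace the trace condition at a \emph{fixed} $\ell$ by a ``$\ell$ splits completely in $\Q(\pi_\fp(E_1),\pi_\fp(E_2))$'' condition for \emph{some} $\ell$ in a window $[y,y+u]$, and then invokes Zywina's GRH Chebotarev bound \cite[Thm.~2.3]{Zy15}, which exploits the Borel subgroup $B(\ell)$ and its abelian quotient $B(\ell)/U'(\ell)$; this two-step reduction is what gives $x^{6/7}$. Finally, your suggestion to use the Heath--Brown square sieve à la Baier--Patankar for part (i) cannot work: Baier--Patankar's unconditional square-sieve bound is $\frac{x(\log\log x)^{22/21}}{(\log x)^{43/42}}$, which is strictly weaker than the $\frac{x(\log\log x)^{1/9}}{(\log x)^{19/18}}$ claimed here (since $19/18>43/42$), so a different mechanism—Serre's unconditional Chebotarev after the $\Lambda(\ell)$-quotient, with the constraint $\log x\gg n_L(\log|d_L|)^2$ forcing $\ell\asymp(\log x)^{1/18}(\log\log x)^{-1/9}$—is essential. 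In short: the group-theoretic reductions (scalar quotient; Borel subgroup; the max lemma) are the content of the proof, not bookkeeping, and they are missing from your proposal.
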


\begin{remark}\label{Mayle-Wang-constants}
{\emph{
Taking $K = \Q$ in the setting of Theorem \ref{theorem3}
and
 invoking \cite[Corollary 1.2, p. 3]{MaWa23} instead of \cite[Lemma 7.1, p. 409]{Lo16}
  in the proofs of parts (ii) and (iii),
 our proof leads to the following more explicit bounds.
 \begin{enumerate} 
\item[(ii')]
If GRH  holds,  
then
there exists an 
absolute,  effectively computable, positive  constant $\kappa_1'$
 such that, 
for any sufficiently large $x$,
$
{\cal{T}}_{E_1, E_2}^{\alpha_1,\alpha_2}(x) 
< 
\kappa_1' 
\frac{ x^{\frac{6}{7}}}{ (\log x)^{\frac{5}{7}}} 
\left(\log (N_1 N_2)\right)^{\frac{7}{2}}\left(\alpha_1\alpha_2\right)^{\frac{5}{2}}.
$
\item[(iii')]
If GRH, AHC, and PCC hold,  
then
there exists an
absolute,  effectively computable, positive  constant $\kappa_1''$
 such that, 
for any sufficiently large $x$,
$
{\cal{T}}_{E_1, E_2}^{\alpha_1,\alpha_2}(x) < \kappa_1''  x^{\frac{2}{3}} (\log x)^{\frac{1}{3}}  \left(\log (N_1 N_2)\right)^{\frac{3}{2}}\left(\alpha_1\alpha_2\right)^{\frac{1}{2}}. 
$
\end{enumerate}
}}
\end{remark}

\begin{remark}\label{lang-trotter-theorems}
{\emph{
Theorem \ref{theorem3} may  be viewed under the general Lang-Trotter theme of
 results 
 about the number of primes for which the Frobenius trace of an abelian variety is fixed, 
 such as those proven in 
 \cite{ChJoSe22},
 \cite{CoWa22}, \cite{CoWa23}, 
 \cite{Mu85}, \cite{MuMuSa88}, \cite{MuMuWo18}, \cite{Se81}, 
 \cite{ThZa18}, 
 and \cite{Zy15}.
 The connection between Theorem \ref{theorem3}, and thus   Theorem \ref{theorem1},
 with the Lang-Trotter Conjectures on Frobenius traces formulated in \cite[p. 33]{LaTr76} 
 and \cite[p. 382]{ChJoSe22}
  prompts the question of predicting,  conjecturally, the asymptotic behavior of 
${\cal{F}}_{E_1, E_2}(x)$
and
${\cal{T}}_{E_1, E_2}^{\alpha_1,\alpha_2}(x)$
for $E_1$, $E_2$, $\alpha_1$, and $\alpha_2$ as in the setting of Theorem \ref{theorem3}.
We relegate such investigations to a future project.
}}
\end{remark}


\medskip

\noindent
{\bf{Notation}}

\noindent






\noindent
$\bullet$
 Given a number field $K$, 
 we denote  
 by  ${\cal{O}}_K$ its ring of integers,
  by ${\sum}_K$ the set of non-zero prime ideals of  ${\cal{O}}_K$,
 by  $n_K$ 
 the degree of $K$ over $\Q$,
 by $d_K \in \Z \backslash \{0\}$ the discriminant of an integral basis of ${\cal{O}}_K$,
 and 
 by  $\disc(K/\Q) = \Z d_K \unlhd \Z$ the discriminant ideal of $K/\Q$.
 For a prime ideal $\mathfrak{p} \in {\sum}_K$, 
 we denote by $\Nr_{K}(\mathfrak{p})$ its norm in $K/\Q$.
 We say that $K$ satisfies the Generalized Riemann Hypothesis (GRH) if
 the Dedekind zeta function $\zeta_K$ of $K$ has the property that,
 for any $\rho \in \C$ with $0 \leq \Re \rho \leq 1$ and $\zeta_K(\rho) = 0$, we have $\Re(\rho) = \frac{1}{2}$. When $K=\Q$, the Dedekind zeta function is the Riemann zeta function, in which case we refer to GRH  as  the Riemann Hypothesis (RH). 

\noindent
$\bullet$
Given  a finite Galois extension $L/K$ of number fields
and a subset ${\cal{C}} \subseteq \Gal(L/K)$, 
stable under conjugation, 
we denote by $\pi_{ {\cal{C}} }(x, L/K)$
 the number of non-zero prime ideals of the ring of integers of $K$, unramified in $L$, of norm at most $x$,
 for which the Frobenius element is contained in $\cal{C}$.
We set
$$M(L/K)
 := 
 2 [L:K] |d_K|^{\frac{1}{n_K}} 
{\ds\prod_{p}}'
 p,$$
 with the dash on the product indicating that
 each of the primes $p$ therein lies over a non-zero prime ideal $\wp$ of ${\cal{O}}_L$,
 with  $\wp$ ramified in  $L$.

\noindent
$\bullet$
Given  a finite Galois extension $L/K$ of number fields
and  an irreducible character $\chi$ of the Galois group of $L/K$,
we denote 
by $\mathfrak{f}(\chi) \unlhd {\cal{O}}_K$ the global Artin conductor of $\chi$,
by $A_{\chi} := |d_L|^{\chi(1)} \Nr_{K}(\mathfrak{f}(\chi)) \in \Z$ the conductor of $\chi$,
and by ${\cal{A}}_{\chi}(T)$ the function of a positive real variable $T > 3$ defined by the relation
$$
\log {\cal{A}}_{\chi}(T) = \log A_{\chi} + \chi(1) n_K \log T.
$$
 
\noindent
$\bullet$
Given  a finite Galois extension $L/K$ of number fields,
we say that it satisfies Artin's Holomorphy Conjecture (AHC)
if, for any irreducible character $\chi$ of the Galois group of $L/K$,
the Artin L-function $L(s, \chi, L/K)$ extends to a function that is
analytic on  $\C$, except at $s = 1$ when $\chi = 1$.
We recall that, if
we assume GRH for $L$ and AHC for $L/K$, 
then, given 
any  irreducible character $\chi$ of the Galois group of $L/K$,
and given any non-trivial zero 
$\rho$ of $L(s, \chi, L/K)$, 
 the real part 
 of $\rho$ 
 satisfies
$\Re \rho = \frac{1}{2}$.
In this case, we write $\rho = \frac{1}{2} + i \gamma$, where 
$\gamma$ denotes the imaginary part 
of $\rho$.

\noindent
$\bullet$
Given  a finite Galois extension $L/K$ of number fields,
let us assume GRH for $L$ and AHC for $L/K$.
For an  irreducible character $\chi$ of the Galois group of $L/K$
and an arbitrary $T > 0$, 
we define the pair correlation function of $L(s, \chi, L/K)$ by
\begin{equation*}\label{pair-cor-fcn}
{\cal{P}}_T (X, \chi)
:=
\ds\sum_{- T \leq \gamma_1 \leq T}
\ds\sum_{- T \leq \gamma_2 \leq T}
w(\gamma_1 - \gamma_2)
e((\gamma_1 - \gamma_2) X),
\end{equation*}
where
$\gamma_1$ and $\gamma_2$ range over all the imaginary parts of the non-trivial zeroes
$\rho = \frac{1}{2} +  i \gamma$
 of $L(s, \chi, L/K)$,
counted with multiplicity, 
and where,
 for an arbitrary real number $u$,
$e(u) := \exp(2 \pi i u)$ and $w(u) := \frac{4}{4 + u^2}$.
We say that the extension $L/K$ satisfies the Pair Correlation Conjecture (PCC) 
if, 
 for any irreducible character $\chi$ of the Galois group of $L/K$ and for any $A > 0$ and $T > 3$,
 provided
$0 \leq Y \leq A \chi(1) n_K \log T$,
we have
$$
{\cal{P}}_T(Y, \chi) \ll_A \chi(1)^{-1} T \log {\cal{A}}_{\chi}(T).
$$

\medskip



\section{From shared Frobenius fields to shared absolute values of  Frobenius traces}\label{Section-Frob-fields-to-Frob-traces}

We keep the general setting and notation from Section \ref{Section-introduction}.
To prove Theorem \ref{theorem1}, we reduce the study of the primes $\mathfrak{p}$ for which the Frobenius fields of $E_1$ and $E_2$ coincide 
to a study of the primes $\mathfrak{p}$ for which the absolute values of the Frobenius traces of $E_1$ and $E_2$ coincide, as follows. 

\begin{lemma}\label{Frob-fields-to-traces}
Let $E_1$ and $E_2$ be elliptic curves over a number field $K$, 
non-isogenous over $K$.
 Denote by $N_1$ and $N_2$
 the norms of the conductors  of $E_1$ and $E_2$, respectively.
 Let $\mathfrak{p}$ be a degree one prime of $K$ such that 
 the rational prime
 $p := \Nr_{K}(\mathfrak{p})$ satisfies
 $p \nmid 6 N_1 N_2$.
 Assume that
 $\Q(\pi_{\mathfrak{p}}(E_1)), \Q(\pi_{\mathfrak{p}}(E_2)) \not\in \left\{\Q(i), \Q(i \sqrt{3})\right\}$.
Then 
$\Q(\pi_{\mathfrak{p}}(E_1)) = \Q(\pi_{\mathfrak{p}}(E_2))$
if and only if 
$|a_{\mathfrak{p}}(E_1)| = |a_{\mathfrak{p}}(E_2)|$.
\end{lemma}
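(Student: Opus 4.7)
The plan is to identify $\pi_{\mathfrak{p}}(E_j)$ as an algebraic integer of norm $p := \Nr_K(\mathfrak{p})$ in the imaginary quadratic Frobenius field $K_j := \Q(\pi_{\mathfrak{p}}(E_j))$, and to leverage the very restrictive structure of the set of norm-$p$ elements in $\mathcal{O}_K$ once the two Frobenius fields are assumed to coincide.

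The ``if'' direction I would dispense with in a single line: since $\Q(\pi_{\mathfrak{p}}(E_j)) = \Q\bigl(\sqrt{a_{\mathfrak{p}}(E_j)^2 - 4p}\bigr)$, the equality $|a_{\mathfrak{p}}(E_1)| = |a_{\mathfrak{p}}(E_2)|$ forces $a_{\mathfrak{p}}(E_1)^2 - 4p = a_{\mathfrak{p}}(E_2)^2 - 4p$, and hence the Frobenius fields coincide.

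For the ``only if'' direction, set $K := \Q(\pi_{\mathfrak{p}}(E_1)) = \Q(\pi_{\mathfrak{p}}(E_2))$ and write $\pi_j := \pi_{\mathfrak{p}}(E_j)$. Because $\mathfrak{p}$ is of degree one, each $\pi_j$ is a root of the monic integer polynomial $X^2 - a_{\mathfrak{p}}(E_j) X + p$, so $\pi_j \in \mathcal{O}_K$ with $\Nr_{K/\Q}(\pi_j) = p$. The principal ideal $(\pi_j) \subseteq \mathcal{O}_K$ therefore has ideal norm $p$ and is consequently a prime of $\mathcal{O}_K$ above the rational prime $p$; the at most two such primes are $\mathfrak{P}$ and $\overline{\mathfrak{P}}$ if $p$ splits in $K$, and a single prime $\mathfrak{P}$ if $p$ ramifies. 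Thus $(\pi_2)$ must equal $(\pi_1)$ or $(\overline{\pi_1})$, and so $\pi_2 = u \pi_1$ or $\pi_2 = u \overline{\pi_1}$ for some unit $u \in \mathcal{O}_K^{\times}$. The exclusion $K \notin \{\Q(i), \Q(i\sqrt{3})\}$ reduces $\mathcal{O}_K^{\times}$ to $\{\pm 1\}$, which yields $\pi_2 \in \{\pm \pi_1, \pm \overline{\pi_1}\}$; since $\pi_1$ and $\overline{\pi_1}$ share the trace $a_{\mathfrak{p}}(E_1)$, we conclude $a_{\mathfrak{p}}(E_2) = \pm\, a_{\mathfrak{p}}(E_1)$, hence $|a_{\mathfrak{p}}(E_1)| = |a_{\mathfrak{p}}(E_2)|$.

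I do not anticipate a genuine obstacle: the only step that deserves any attention is the verification that $(\pi_j)$ is actually a prime ideal of $\mathcal{O}_K$ of norm $p$ (immediate from multiplicativity of the ideal norm) and that no third prime divisor of $(p)\mathcal{O}_K$ is available. The hypotheses $p \nmid 6 N_1 N_2$ beyond good reduction, and the non-$K$-isogeny of $E_1$ and $E_2$, do not appear to enter this local argument; I read them as hypotheses inherited from the global context in which the lemma will be applied.
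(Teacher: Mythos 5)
Your proof is correct and rests on the same central idea as the paper's: factor the principal ideal $(\pi_{\mathfrak{p}}(E_j))$ in the ring of integers of the common Frobenius field $F$ and exploit that $\mathcal{O}_F^{\times} = \{\pm 1\}$ once $\Q(i)$ and $\Q(i\sqrt{3})$ are excluded. The one substantive difference is that you avoid the paper's three-way case analysis (both supersingular, both ordinary, mixed): the paper handles the supersingular case separately via $a_{\mathfrak{p}}(E_1)=a_{\mathfrak{p}}(E_2)=0$ and dispatches the mixed case by a split/ramified contradiction, whereas you observe directly that $(\pi_j)$ has ideal norm $p$ and hence is one of the at most two Galois-conjugate primes above $p$, an argument valid uniformly whether $p$ splits or ramifies in $F$. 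This is a genuinely cleaner and slightly more general treatment; in particular it does not secretly rely on $p>3$ to force $a_{\mathfrak{p}}(E_j)=0$ in the supersingular case, so the hypothesis $p\nmid 6$ is indeed, as you note, not used in your local argument. One cosmetic nit: you reuse the symbol $K$ for the Frobenius field, which collides with the base number field $K$ of the lemma; the paper writes $F$ for the Frobenius field to avoid this.
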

\begin{proof}
The ``if'' implication is clear, since, for each $1 \leq j \leq 2$,  
$\Q(\pi_{\mathfrak{p}}(E_j)) = \Q\left(\sqrt{a_{\mathfrak{p}}(E_j)^2 - 4 p}\right)$. 
To justify the ``only if'' implication, 
we distinguish between 
$\mathfrak{p}$
supersingular 
and 
ordinary 
for 
 $E_1$ and $E_2$.
If $\mathfrak{p}$ is supersingular for both $E_1$ and $E_2$, then $a_{\mathfrak{p}}(E_1) = a_{\mathfrak{p}}(E_2) = 0$.
When $\mathfrak{p}$ is ordinary for both $E_1$ and $E_2$, or ordinary for one of $E_1$ or $E_2$, and supersingular for the other, 
we look at
the prime ideal factorization of $p$ in the ring of integers ${\cal{O}}_F$ of the imaginary quadratic field
$F := \Q(\pi_{\mathfrak{p}}(E_1)) = \Q(\pi_{\mathfrak{p}}(E_2))$.
By the lemma's hypothesis, the group of units 
of  ${\cal{O}}_F$ is
${\cal{O}}_F^{\times} = \{-1, 1\}$.
If $\mathfrak{p}$ is ordinary for both $E_1$ and $E_2$, 
then $p$ splits completely in $\Q(\pi_{\mathfrak{p}}(E_1))$ and $\Q(\pi_{\mathfrak{p}}(E_2))$, hence in $F$.
Then, as ideals in ${\cal{O}}_F$, either
$(\pi_{\mathfrak{p}}(E_1)) = (\pi_{\mathfrak{p}}(E_2))$,
or
$(\pi_{\mathfrak{p}}(E_1)) = \left(\overline{\pi_{\mathfrak{p}}(E_2)}\right)$. 
As such, 
$\pi_{\mathfrak{p}}(E_1) \in \left\{-\pi_{\mathfrak{p}}(E_2), \pi_{\mathfrak{p}}(E_2)\right\}$ 
or 
$\pi_{\mathfrak{p}}(E_1) \in \left\{-\overline{\pi_{\mathfrak{p}}(E_2)}, \overline{\pi_{\mathfrak{p}}(E_2)}\right\}$, 
which implies that
$\Tr_{F/\Q}(\pi_{\mathfrak{p}}(E_1)) \in \left\{- \Tr_{F/\Q}(\pi_{\mathfrak{p}}(E_2)), \Tr_{F/\Q}(\pi_{\mathfrak{p}}(E_2))\right\}$, where $\Tr_{F/\Q}(\alpha)$ denotes the trace of the algebraic number $\alpha\in F$. 
Since, for each $1 \leq j \leq 2$,  
$a_{\mathfrak{p}}(E_j) = \Tr_{\Q(\pi_{\mathfrak{p}}(E_j))/\Q}(\pi_{\mathfrak{p}}(E_j))$,
we obtain that $|a_{\mathfrak{p}}(E_1)| = |a_{\mathfrak{p}}(E_2)|$.
If $\mathfrak{p}$ is ordinary for one of $E_1$ or  $E_2$, say, for $E_1$, and supersingular for the other, say, for  $E_2$, 
then $p$ splits completely in $\Q(\pi_{\mathfrak{p}}(E_1))$ and ramifies in  $\Q(\pi_{\mathfrak{p}}(E_2))$,  contradicting
that 
$\Q(\pi_{\mathfrak{p}}(E_1)) = \Q(\pi_{\mathfrak{p}}(E_2))$.
Thus, this case does not occur.
\end{proof}

\section{Elliptic curves with shared absolute values of Frobenius traces}\label{Section-shared-traces}

We keep the general setting and notation from Section \ref{Section-introduction}.
In light of Lemma \ref{Frob-fields-to-traces}, in order to prove Theorem \ref{theorem1},
we  focus on the primes $\mathfrak{p}$ for which
$|a_{\mathfrak{p}}(E_1)| = |a_{\mathfrak{p}}(E_2)|$.
We view this condition as a combination of two  linear relations between the traces of $E_1$ and $E_2$,  namely
$a_{\mathfrak{p}}(E_1) + a_{\mathfrak{p}}(E_2) = 0$ and $a_{\mathfrak{p}}(E_1) - a_{\mathfrak{p}}(E_2) = 0$,
which are particular cases of   Theorem \ref{theorem3},.
Our goal in this section is to prove Theorem \ref{theorem3}.  

\subsection{Preliminaries}\label{Subsection-shared-traces-prelim}

We follow the methods developed in \cite{CoWa23} and \cite{Wa23}.
These methods already give rise to the  stated conditional estimates for ${\cal{T}}_{E_1, E_2}^{1,1}(x)$,
but need to be adjusted for the general conditional and unconditional bounds, as we explain below.

Consider the abelian surface
$$A := E_1 \times E_2.$$
For an arbitrary rational prime $\ell$,
consider 
 the residual  modulo $\ell$   Galois representations 
$\overline{\rho}_{A, \ell}$, 
$\overline{\rho}_{E_1, \ell}$, 
and
$\overline{\rho}_{E_2, \ell}$
of $A$, $E_1$, and $E_2$, respectively,
defined by the action of $\Gal\left(\overline{K}/K\right)$ on the $\ell$-division groups $A[\ell]$, $E_1[\ell]$,  and $E_2[\ell]$, respectively.
We recall that 
\begin{equation}\label{Galois-Image}
\overline{\rho}_{A, \ell}(\sigma) = (\overline{\rho}_{E_1, \ell}(\sigma), \overline{\rho}_{E_2, \ell}(\sigma)) \ \text{for any }  \sigma \in \Gal\left(\overline{K}/K\right),
\end{equation}
\begin{equation}\label{Galois-Trace}
\tr(\overline{\rho}_{E_j, \ell}(\Frob_\fp)) \equiv a_\fp(E_j) (\mod \ell) \ \text{for any } \fp \ \text{with}
\gcd(\Nr_K(\fp),  \ell N_j)=1  \text{and for any} \ 1 \leq j \leq 2.
\end{equation}
Setting
$$
G(\ell)
:= 
\left\{(M_1, M_2) \in \GL_2(\F_\ell) \times \GL_2(\F_\ell): \det M_1 = \det M_2\right\},
$$
we recall from \cite[Lemma 7.1, p. 409]{Lo16} that, 
thanks to our assumptions
that $E_1$ and $E_2$ are without complex multiplication and  not potentially isogenous,
there exists a positive integer  $c(A, K)$, which depends on $A$ and $K$, 
such that if $\ell > c(A, K)$, then
 $\Im \overline{\rho}_{A, \ell} = G(\ell)$, 
 that is, 
\begin{equation}\label{gal-div-field}
\Gal(K(A[\ell])/K) 
\simeq
G(\ell).
\end{equation}
For 
an arbitrary pair  of matrices $(M_1, M_2) \in G(\ell)$
and
for
each $1 \leq j \leq 2$, 
we denote by $\lambda_1(M_j), \lambda_2(M_j) \in \overline{\F}_{\ell}$ the eigenvalues of $M_j$.
Associated to $G(\ell)$, we consider the  groups
\begin{eqnarray*}
B(\ell) := 
\left\{
\left(\begin{pmatrix} 
  \ast  &\ast \\ 0 & \ast  
  \end{pmatrix},
  \begin{pmatrix} 
 \ast  & \ast \\ 0 & \ast
  \end{pmatrix}
  \right) 
  \in
  G(\ell) 
\right\},
&&
\Lambda(\ell) := 
\left\{
\left(\begin{pmatrix} 
  a  & 0\\ 0 & a  
  \end{pmatrix},
  \begin{pmatrix} 
  a  & 0\\ 0 & a  
  \end{pmatrix}
  \right) 
  \in
  G(\ell): a \in \F_\ell^{\times}
\right\},
\\
U(\ell) := 
\left\{
\left(\begin{pmatrix} 
  1  &\ast \\ 0 & 1 
  \end{pmatrix},
  \begin{pmatrix} 
  1  & \ast\\ 0 & 1  
  \end{pmatrix}
  \right) 
  \in
  G(\ell) 
\right\},
&&
 U'(\ell) := \Lambda(\ell) \cdot U(\ell),
\quad
P(\ell) := G(\ell)/\Lambda(\ell),
\end{eqnarray*}
and the sets
\begin{eqnarray*}
G(\ell)^{\#} 
&:= & 
\text{the set of conjugacy classes of } G(\ell),
\\
P(\ell)^{\#} 
&:= & 
\text{the set of conjugacy classes of } P(\ell),
\\
\cal{C}(\ell)^{\alpha_1,\alpha_2}
&:=&
\left\{
(M_1, M_2) \in G(\ell):
\lambda_1(M_j), \lambda_2(M_j) \in \F_{\ell}^{\times} \ \forall 1 \leq j \leq 2,
\alpha_1\tr M_1 + \alpha_2\tr M_2 = 0
\right\},
\\
\cal{C}_0(\ell)^{\alpha_1,\alpha_2}
&:=&
\left\{
(M_1, M_2) \in G(\ell):
\alpha_1\tr M_1 + \alpha_2\tr M_2 = 0
\right\},
\\
\cal{C}_{\text{Borel}}(\ell)^{\alpha_1,\alpha_2} 
&:=&
{\cal{C}}(\ell)^{\alpha_1,\alpha_2} \cap B(\ell),
\\
\widehat{\cal{C}}_{\text{Borel}}(\ell)^{\alpha_1,\alpha_2} 
&:=&
\; \text{the image of $\cal{C}_{\text{Borel}}(\ell)^{\alpha_1,\alpha_2}$ in  $B(\ell)/U'(\ell)$},
\\
\widehat{\cal{C}}_{\text{Proj}}(\ell)^{\alpha_1,\alpha_2} 
&:=&
\; \text{the image of $\cal{C}_0(\ell)^{\alpha_1,\alpha_2}$ in  $G(\ell)/\Lambda(\ell)$}.
\end{eqnarray*}



With the above notation, our strategy 
for proving parts (i) and (iii)  of Theorem \ref{theorem3}
is  to relate  
 \begin{center}
 ${\cal{T}}_{E_1, E_2}^{\alpha_1,\alpha_2}(x)$ to $\pi_{\widehat{\cal{C}}_{\text{Proj}}(\ell)^{\alpha_1,\alpha_2} }\left(x, K(A[\ell])^{\Lambda(\ell)}/K\right)$,
 \end{center}
 and
our strategy 
for proving part (ii)   of Theorem \ref{theorem3}
 is  to relate
\begin{center}
 ${\mathcal{T}}_{E_1, E_2}^{\alpha_1,\alpha_2}(x)$ to $\pi_{ {\cal{C}}_{\text{Borel}}(\ell)^{\alpha_1,\alpha_2}}\left(x, K(A[\ell])^{U'(\ell)}/K(A[\ell])^{B(\ell)}\right)$.
 \end{center}
After establishing these relations, we apply different variations of the effective Chebotarev density theorem
 to obtain upper bounds for 
 the number of primes $\mathfrak{p}$ whose Frobenius element satisfies the desired Chebotarev conditions.
In the end, we minimize the bounds by choosing 
$\ell$ suitably as a function of $x$.

Before executing this strategy,
we record a few properties of the groups and sets introduced above.

\begin{lemma}\label{lemma-subgroups}
For $\ell$ an arbitrary rational prime, the following statements hold.
\noindent
\begin{enumerate}
\item[(i)]
$\Lambda(\ell)$ is a normal subgroup of $G(\ell)$.
\item[(ii)]
$U'(\ell)$ is a normal subgroup of $B(\ell)$, with $B(\ell)/U'(\ell)$ an abelian group. 
\end{enumerate}
\end{lemma}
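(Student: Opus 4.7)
My plan is to verify both claims by direct matrix computations within $\GL_2(\F_\ell) \times \GL_2(\F_\ell)$; nothing beyond careful bookkeeping should be required. For part (i), the key point is that $\Lambda(\ell)$ consists of pairs $(aI, aI)$ of scalar matrices, and scalar matrices are central in $\GL_2(\F_\ell)$. Consequently any $(M_1, M_2) \in G(\ell)$ commutes with every element of $\Lambda(\ell)$, so $\Lambda(\ell)$ is in fact central in $G(\ell)$, hence normal.

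For the normality assertion in part (ii), I observe that a general element of $U'(\ell) = \Lambda(\ell) \cdot U(\ell)$ has the form $(X, Y) = \left(\begin{pmatrix} a & x \\ 0 & a \end{pmatrix}, \begin{pmatrix} a & y \\ 0 & a \end{pmatrix}\right)$, in particular with the \emph{same} scalar $a$ on both diagonals. Writing $X = aI + N_X$ with $N_X$ strictly upper triangular, and similarly $Y = aI + N_Y$, I will conjugate by a generic $(A, B) \in B(\ell)$ and note that $AN_XA^{-1}$ remains strictly upper triangular because $A$ is upper triangular and $N_X$ is a nilpotent strictly upper triangular matrix. Hence $AXA^{-1}$ has the form $\begin{pmatrix} a & \ast \\ 0 & a \end{pmatrix}$ and $BYB^{-1}$ has the form $\begin{pmatrix} a & \ast \\ 0 & a \end{pmatrix}$ with the \emph{same} scalar $a$, so $(AXA^{-1}, BYB^{-1}) \in U'(\ell)$.

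For abelianness of $B(\ell)/U'(\ell)$, it suffices to show that every commutator in $B(\ell)$ lies in $U'(\ell)$. Since each component is upper triangular, for any two elements $A_1, A_2$ of the upper triangular subgroup of $\GL_2(\F_\ell)$ both products $A_1 A_2$ and $A_2 A_1$ carry the same diagonal entries, so $[A_1, A_2]$ is upper triangular with trivial diagonal, i.e.\ lies in the upper unipotent subgroup. Applying this coordinatewise shows that $[(A_1, B_1), (A_2, B_2)] = ([A_1, A_2], [B_1, B_2]) \in U(\ell) \subseteq U'(\ell)$, which gives the claim. The only (very mild) obstacle is keeping track of the definitions — specifically, that membership in $U'(\ell)$ requires the \emph{common} scalar $a$ on the diagonals of both components, and it is precisely this feature that lets the conjugation and commutator computations close up within $U'(\ell)$.
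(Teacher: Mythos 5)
Your proof is correct and complete. The paper does not actually present an argument for this lemma: it declares part (i) ``clear'' and cites \cite[Lemma 11, p.~697]{CoWa23} for part (ii), so there is no in-text proof to compare against. Your argument supplies exactly the expected details: for (i) you note scalar pairs are central, hence normal; for (ii) you observe that conjugation by upper-triangular matrices preserves the strictly upper-triangular part while leaving the (common) scalar diagonal fixed, and that commutators of upper-triangular pairs land in $U(\ell)\subseteq U'(\ell)$ because the diagonal of a product of upper-triangular matrices is the product of diagonals. One small point worth making explicit (you implicitly handle it): to conclude the conjugate or the commutator lies in $U'(\ell)$ or $U(\ell)$ one also needs membership in $G(\ell)$, i.e.\ equality of determinants in the two components; this holds automatically because conjugation preserves determinant and commutators have determinant $1$. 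The argument is self-contained where the paper merely cites, which is a modest improvement in readability.
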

\begin{proof}
Part (i) is clear. 
Part (ii) is \cite[Lemma 11, p. 697]{CoWa23}. 
\end{proof}

\begin{lemma}\label{lemma-containment}
For $\ell$ an arbitrary rational prime, the following statements hold.
\noindent
\begin{enumerate}
\item[(i)] $U'(\ell) \ \cal{C}_{\text{Borel}}(\ell)^{\alpha_1,\alpha_2} \subseteq \cal{C}_{\text{Borel}} (\ell)^{\alpha_1,\alpha_2}$.
\item[(ii)]Every conjugacy class in $\cal{C}(\ell)^{\alpha_1,\alpha_2}$ contains an element of $B(\ell)$.
\item[(iii)]$\Lambda(\ell) \ \cal{C}_0(\ell)^{\alpha_1,\alpha_2} \subseteq \cal{C}_0(\ell)^{\alpha_1,\alpha_2}$.
\end{enumerate}
\end{lemma}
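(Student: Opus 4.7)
All three assertions reduce to explicit matrix manipulations with the subgroups defined just above the lemma. Parts (i) and (iii) amount to observing that left-multiplication by the relevant subgroup is compatible with the upper-triangular shape and with the linear trace relation, while part (ii) requires constructing a simultaneous triangularizer that lies inside $G(\ell)$, respecting the equal-determinant condition; I expect this to be the genuine obstacle.

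For (iii), I would take a typical element of $\Lambda(\ell)$, namely a pair $(aI,aI)$ with the same $a\in\F_\ell^\times$ in both slots, and check that the product with $(M_1,M_2)$ is $(aM_1,aM_2)$, whose two determinants still agree (both scale by $a^2$) and whose traces satisfy $\alpha_1\tr(aM_1)+\alpha_2\tr(aM_2)=a(\alpha_1\tr M_1+\alpha_2\tr M_2)=0$. Part (i) would proceed analogously: after writing a typical element of $U'(\ell)=\Lambda(\ell)U(\ell)$ as $\Bigl(\bigl(\begin{smallmatrix}a & ax_1\\0 & a\end{smallmatrix}\bigr),\bigl(\begin{smallmatrix}a & ax_2\\0 & a\end{smallmatrix}\bigr)\Bigr)$, left-multiplication against $(M_1,M_2)\in\cal{C}_{\text{Borel}}(\ell)^{\alpha_1,\alpha_2}$ preserves the upper-triangular shape (hence membership in $B(\ell)$), multiplies each diagonal entry by $a$ (hence each trace by $a$, preserving the relation), and keeps the eigenvalues in $\F_\ell^\times$.

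For (ii), given $(M_1,M_2)\in\cal{C}(\ell)^{\alpha_1,\alpha_2}$, the assumption $\lambda_1(M_j),\lambda_2(M_j)\in\F_\ell^\times$ guarantees that each $M_j$ is individually similar over $\F_\ell$ to an upper-triangular matrix, so I would pick $Q_j\in\GL_2(\F_\ell)$ with $Q_jM_jQ_j^{-1}$ upper triangular. The pair $(Q_1,Q_2)$ need not lie in $G(\ell)$; I would repair this by replacing $Q_2$ with $\diag(b,1)Q_2$, where $b:=\det Q_1/\det Q_2\in\F_\ell^\times$. Since $\diag(b,1)$ is itself upper-triangular, this diagonal conjugation preserves the upper-triangular shape of $Q_2M_2Q_2^{-1}$, so the corrected conjugator still triangularizes $M_2$, while $\det(\diag(b,1)Q_2)=\det Q_1$ places the pair in $G(\ell)$. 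Since conjugation preserves traces, eigenvalues, and the determinant relation, the conjugate of $(M_1,M_2)$ lands in $B(\ell)\cap\cal{C}(\ell)^{\alpha_1,\alpha_2}=\cal{C}_{\text{Borel}}(\ell)^{\alpha_1,\alpha_2}$.

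The main obstacle is precisely this determinant-matching step in (ii). A naive attempt to rescale $Q_j$ by scalars $c_j\in\F_\ell^\times$ only changes $\det Q_j$ by $c_j^2$, so the target ratio $\det Q_1/\det Q_2$ could be unreachable whenever it is a non-square in $\F_\ell^\times$; the off-diagonal choice $\diag(b,1)$ realizes every value of $b\in\F_\ell^\times$, square or not, and bypasses this arithmetic obstruction. Once that correction is in place, the remainder of the argument is formal.
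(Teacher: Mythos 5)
Your proofs of parts (i) and (iii) match the paper's in substance: for (iii) you multiply by $(aI,aI)$ and observe traces scale by $a$, and for (i) you use the decomposition $U'(\ell)=\Lambda(\ell)U(\ell)$ and note that left multiplication by an element of $U'(\ell)$ with diagonal entry $a$ scales each trace by $a$ while preserving the upper-triangular shape and the non-vanishing of eigenvalues. (The paper phrases this slightly more compactly—it works directly with ``an element of $U'(\ell)$ whose diagonals are equal to some $a$''—but the content is identical.)

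For part (ii), your route differs from the paper's in one small respect: the paper disposes of it by citing \cite[Lemma 15, p.\ 700]{CoWa23}, which already asserts that every pair in $G(\ell)$ with rational eigenvalues is $G(\ell)$-conjugate into $B(\ell)$, whereas you reprove this fact from scratch. Your reproof is correct and self-contained. The determinant-correction step is indeed the crux: you correctly identify that rescaling $Q_2$ by a scalar $c$ only changes $\det Q_2$ by $c^2$ and so cannot reach a non-square target ratio, and that the diagonal $\diag(b,1)$ (which normalizes the upper triangular matrices) realizes every $b\in\F_\ell^\times$; this places $(Q_1,\diag(b,1)Q_2)$ in $G(\ell)$ and preserves the triangularization of $M_2$. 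The two approaches buy you the same thing; the citation is shorter, your version makes the argument visible. Either is acceptable, though in the paper's context the citation is natural since the generalization from $\alpha_1=\alpha_2=1$ to arbitrary $(\alpha_1,\alpha_2)$ is vacuous for (ii) (the set $\cal{C}(\ell)^{\alpha_1,\alpha_2}$ is contained in the set that \cite{CoWa23} already treats, with the trace condition playing no role in the conjugation argument).
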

\begin{proof}
For part (i), the case $\alpha_1=\alpha_2=1$ is 
\cite[Lemma 14 (vi), p. 699]{CoWa23}.  In general, let $M'=(M_1',M_2')\in U'(\ell)$ be such that the diagonals are equal to some $a\in (\Z/\ell\Z)^{\times}$
and let $M=(M_1, M_2)\in  \cal{C}_{\text{Borel}}(\ell)^{\alpha_1,\alpha_2}$. Then $M'M\in B(\ell)$ and 
\[
\alpha_1\tr(M_1'M_1)+\alpha_2\tr(M_2'M_2)=a\left(\alpha_1\tr(M_1)+\alpha_2\tr(M_2)\right)=0.
\]
As such, $M'M\in  \cal{C}_{\text{Borel}} (\ell)^{\alpha_1,\alpha_2}$.

For part (ii), 
the  case $\alpha_1=\alpha_2=1$ is  \cite[ Lemma 16, p. 700]{CoWa23}.
In fact, by \cite[ Lemma 15, p. 700]{CoWa23}, every element in 
$\left\{
(M_1, M_2) \in G(\ell):
\lambda_1(M_j), \lambda_2(M_j) \in \F_{\ell}^{\times} \ \forall 1 \leq j \leq 2
\right\}$
is conjugate to an element in $B(\ell)$. In particular, every conjugacy class in $\cal{C}(\ell)^{\alpha_1,\alpha_2}$ contains an element of $B(\ell)$.

For part (iii), we provide a short proof.

Let $(aI, aI) \in\Lambda(\ell)$, with $a\in \F_\ell^{\times}$,  and let $M=(M_1,M_2)\in\cal{C}_0(\ell)^{\alpha_1,\alpha_2}$. 
Since $(aI, aI)$ and $M$ are in $G(\ell)$, the product $(aI, aI) M=(aM_1,aM_2)$ is in $G(\ell)$. 
Furthermore, since $M$ is in $\cal{C}_0(\ell)^{\alpha_1,\alpha_2}$, we have $\alpha_1\tr(M_1)+\alpha_2\tr(M_2)=0$, which implies that $\alpha_1\tr(aM_1)+\alpha_2\tr(aM_2)=0$. 
Therefore,  $(aI, aI)  M\in\cal{C}_0(\ell)^{\alpha_1,\alpha_2}$. 
\end{proof}

\begin{lemma}\label{lemma-sizes}
For $\ell$ an odd rational prime, the following statements hold.
\noindent
\begin{enumerate}
\item[(i)]
$|B(\ell)|  = (\ell-1)^3 \ell^2$.
\item[(ii)]
$\left|U'(\ell)\right| = (\ell - 1) \ell^2$.
\item[(iii)]
$|\Lambda(\ell)|=\ell-1$.
\item[(iv)]
$|P(\ell)|= (\ell-1)^2\ell^2(\ell+1)^2$.
\item[(v)]
$|G(\ell)^{\#}| \leq 4(\ell+1)^2(\ell-1)$
and 
$|P(\ell)^{\#}| \leq 16(\ell+1)^2$.
\end{enumerate}
\end{lemma}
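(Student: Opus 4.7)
The plan is to establish parts (i)--(iv) by direct parameter counting and to devote the bulk of the effort to part (v), which concerns the sizes of sets of conjugacy classes.

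For part (i), I would parametrize an element of $B(\ell)$ by the four diagonal entries $(a_1, c_1, a_2, c_2) \in (\F_\ell^\times)^4$ subject to the single determinant constraint $a_1 c_1 = a_2 c_2$ (contributing $(\ell-1)^3$) together with the two free upper-right entries $b_1, b_2 \in \F_\ell$; this gives $(\ell-1)^3 \ell^2$ in total. For part (ii), I would observe that $U(\ell)$ has $\ell^2$ elements, that $\Lambda(\ell) \cap U(\ell) = \{(I, I)\}$, and that $\Lambda(\ell)$ centralizes $U(\ell)$, so $|U'(\ell)| = |\Lambda(\ell)| \cdot |U(\ell)| = (\ell-1)\ell^2$. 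Part (iii) is immediate from the definition. For part (iv), I would compute $|G(\ell)|$ by summing $|\det^{-1}(d)|^2 = |\SL_2(\F_\ell)|^2 = \ell^2(\ell-1)^2(\ell+1)^2$ over the $\ell-1$ choices of common determinant $d \in \F_\ell^\times$, and then divide by $|\Lambda(\ell)| = \ell - 1$.

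The heart of the lemma lies in part (v). My strategy is to enumerate the $\GL_2(\F_\ell)$-conjugacy classes by Jordan type (central scalar, split semisimple with distinct eigenvalues, Jordan block, and elliptic/non-split semisimple) and, for each $d \in \F_\ell^\times$, to bound the number $n(d)$ of $\GL_2(\F_\ell)$-classes of determinant $d$. The elliptic contribution requires the Jacobsthal-type identity $\sum_{t \in \F_\ell} \chi(t^2 - 4d) = -1$ for $d \neq 0$, where $\chi$ denotes the Legendre symbol; this yields $n(d) \leq \ell + 2$ uniformly in $d$. Next, I would observe that a pair $(C_1, C_2)$ of $\GL_2(\F_\ell)$-classes with $\det C_1 = \det C_2$ determines a single $\GL_2(\F_\ell) \times \GL_2(\F_\ell)$-orbit inside $G(\ell)$, which by the orbit--stabilizer theorem decomposes into exactly $(\ell - 1)/|\det Z_{\GL_2}(M_1) \cdot \det Z_{\GL_2}(M_2)|$ many $G(\ell)$-orbits. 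Because the determinant of the centralizer equals $\F_\ell^\times$ except in the Jordan-block case, where it equals the index-two subgroup $(\F_\ell^\times)^2$, such splitting is nontrivial only when both $C_j$ are Jordan-block classes, contributing at most $2(\ell - 1)$ extra $G(\ell)$-classes in total. Combining yields
\[
|G(\ell)^{\#}| \;\leq\; \sum_{d \in \F_\ell^\times} n(d)^2 + 2(\ell - 1) \;\leq\; (\ell-1)(\ell+2)^2 + 2(\ell-1) \;\leq\; 4(\ell-1)(\ell+1)^2.
\]

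For $|P(\ell)^{\#}|$, the centrality of $\Lambda(\ell)$ in $G(\ell)$ provides a surjection $G(\ell)^{\#} \twoheadrightarrow P(\ell)^{\#}$ whose fibers are orbits of the $\Lambda(\ell)$-action $(aI, aI) \cdot [(M_1, M_2)] := [(aM_1, aM_2)]$ on $G(\ell)^{\#}$. Comparing characteristic polynomials, the identity $[aM_j] = [M_j]$ forces $a = 1$ unless $\tr M_j = 0$; for $\ell$ odd, a $G(\ell)$-class is thus fixed by a nontrivial element of $\Lambda(\ell)$ precisely when $\tr M_1 = \tr M_2 = 0$, in which case the stabilizer is $\{\pm I\}$. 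Since for each $d \in \F_\ell^\times$ there is exactly one $\GL_2(\F_\ell)$-class of trace zero with determinant $d$, the number $N_0$ of such exceptional $G(\ell)$-classes is at most $\ell - 1$. A short orbit-counting argument then gives
\[
|P(\ell)^{\#}| \;=\; \frac{|G(\ell)^{\#}| + N_0}{\ell - 1} \;\leq\; 4(\ell+1)^2 + 1 \;\leq\; 16(\ell+1)^2.
\]

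The main obstacle is the bookkeeping in part (v): identifying the two exceptional phenomena --- the Jordan-block splitting of $\GL_2(\F_\ell) \times \GL_2(\F_\ell)$-orbits into multiple $G(\ell)$-orbits, and the $\{\pm I\}$-stabilization of trace-zero $G(\ell)$-classes under $\Lambda(\ell)$ --- and verifying that no further source of fragmentation appears. Both phenomena trace back to the fact that $(\F_\ell^\times)^2$ has index two in $\F_\ell^\times$ when $\ell$ is odd. Once these two scenarios are carefully isolated and quantified, the stated bounds follow with generous safety margins.
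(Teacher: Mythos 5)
Your computations in parts (i)--(iv) are correct and match the paper's (the paper simply cites \cite{CoWa23} for (i)--(ii) and declares (iii)--(iv) straightforward).

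For part (v), your route is genuinely different from, and more careful than, the paper's. The paper works with the loose bound $n(d)\leq 2\ell+2$ on the number of $\GL_2(\F_\ell)$-classes of determinant $d$ and then writes $|G(\ell)^{\#}|\leq (2\ell+2)^2(\ell-1)$, implicitly treating a conjugacy class of $G(\ell)$ as if it were determined by a pair of $\GL_2(\F_\ell)$-classes with equal determinant; your proposal correctly points out that this correspondence is not injective, because when both components are Jordan-block type the index-two image of the centralizer determinant causes a single $\GL_2\times\GL_2$-orbit to split into two $G(\ell)$-orbits. You quantify this ($\leq 2(\ell-1)$ extra classes), verify that no other type causes splitting (since elliptic centralizers $\cong\F_{\ell^2}^\times$ have surjective norm and split-semisimple and scalar centralizers also have full determinant image), and pair this with the sharp count $n(d)\leq \ell+2$, obtaining $|G(\ell)^{\#}|\leq (\ell-1)\left[(\ell+2)^2+2\right]$, which is stronger than the stated bound. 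Likewise, for $|P(\ell)^{\#}|$ the paper only compares $\det(M_1)\det(M_2)$ to deduce $a^4=1$ and hence a stabilizer of size $\leq 4$ in $\Lambda(\ell)$; you observe that since $\det M_1=\det M_2$ forces $a^2=1$, the stabilizer is actually $\{\pm 1\}$, exactly when both traces vanish, and you convert this to an exact Burnside count $|P(\ell)^{\#}|=\frac{|G(\ell)^{\#}|+N_0}{\ell-1}$ with $N_0\leq\ell-1$. Both approaches land at the advertised numerical bounds, but yours dispels the splitting subtlety the paper glosses over and gives sharper intermediate constants; the cost is the extra bookkeeping you flagged, all of which checks out. (One minor remark: the Jacobsthal sum is not actually needed for $n(d)\leq\ell+2$ --- for each trace $t$ the characteristic polynomial $X^2-tX+d$ contributes exactly one class unless $t^2=4d$, in which case it contributes two (scalar plus Jordan), so $n(d)=\ell+\#\{t:t^2=4d\}\leq\ell+2$ directly.)
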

\begin{proof}

Parts (i) and (ii) follow from \cite[Lemma 12,  pp. 697--698]{CoWa23}. Parts (iii) and (iv) are straightforward exercises derived from the definitions of the groups and the size of $\GL_2(\F_\ell)$. 
Part (v) is  \cite[Lemma 29, p. 45]{Wa23}, whose proof we include below.

The number of conjugacy classes of $\GL_2(\F_\ell)$ is $\ell^2-1$ (see \cite[p. 91]{FeFi60}).
Following \cite[p. 324]{JaLi01}, 
 these conjugacy classes can be classified into four types. 
 By considering each type, 
  we deduce that,
  for any $d\in \F_\ell^{\times}$,
  the number of conjugacy classes of $\GL_2(\F_\ell)$ with  determinant $d$ is at most $2\ell+2$. 
 Thus, $|G(\ell)^{\#}|\leq (2(\ell+1))^2 \cdot |\F_\ell^{\times}|=4(\ell+1)^2(\ell-1)$.
Now fix an arbitrary element 
 $\cal{C}\in G(\ell)^{\#}$. If there is an element $a\in \F_{\ell}^{\times}$ such that $(a I_2) \cal{C}=\cal{C}$, 
 then, by comparing  determinants, we obtain
$a^{4}=1$. So $a$ takes at most $4$ values in $\F_\ell^{\times}$.
 By the orbit-stabilizer theorem from group theory, each  $\Lambda(\ell)$-orbit 
 of $G(\ell)^{\#}$ contains at least 
 $ \frac{| \Lambda(\ell)|}{4}$ conjugacy classes. Therefore,
$|P(\ell)^{\#}|\leq  \frac{|G(\ell)^{\#}|}{|\F_{\ell}^{\times}|/4} \leq  16(\ell+1)^{2}$.
This completes the proof of (v).
\end{proof}

\begin{lemma}\label{lemma-sizes-sets}
For $\ell$ an odd rational prime such that 
$\ell$ does not divide at least one of $\alpha_1, \alpha_2$,
the following statements hold.
\noindent
\begin{enumerate}
\item[(i)]
$|\cal{C}_0(\ell)^{\alpha_1,\alpha_2}| \leq 2\ell^6 $.
\item[(ii)]
$|\widehat{\cal{C}}_{\text{Borel}}(\ell)^{\alpha_1,\alpha_2}| \leq 2 (\ell -1)$. 
\item[(iii)]
$|\widehat{\cal{C}}_{\text{Proj}}(\ell)^{\alpha_1,\alpha_2}|\leq  2 \ell^5  $.
\end{enumerate}
\end{lemma}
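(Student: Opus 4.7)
\medskip
\noindent\textbf{Proof proposal.} I plan to prove the three parts in turn. Since $\alpha_1,\alpha_2$ are coprime and not both zero, and since $\ell$ does not divide at least one of them, I may assume without loss of generality that $\alpha_2\in\F_\ell^\times$; the other case is symmetric. The trace relation then takes the form $\tr M_2 = -\alpha_2^{-1}\alpha_1\tr M_1$.

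For \emph{part (i)}, the plan is to choose $M_1$ freely in $\GL_2(\F_\ell)$ and count admissible $M_2$. The defining conditions of $\cal{C}_0(\ell)^{\alpha_1,\alpha_2}$ fix both $\tr M_2$ (via the linear relation) and $\det M_2=\det M_1$ (from membership in $G(\ell)$), so the characteristic polynomial of $M_2$ is prescribed. A short case analysis on the conjugacy classes of $\GL_2(\F_\ell)$---split semisimple (class size $\ell(\ell+1)$), non-split semisimple ($\ell(\ell-1)$), scalar ($1$), and non-scalar unipotent ($\ell^2-1$)---shows that at most $\ell(\ell+1)$ matrices of $\GL_2(\F_\ell)$ share a given characteristic polynomial $X^2-tX+d$ with $d\in\F_\ell^\times$. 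Combining with $|\GL_2(\F_\ell)|=\ell(\ell-1)^2(\ell+1)$ yields the sharper intermediate estimate $|\cal{C}_0(\ell)^{\alpha_1,\alpha_2}|\le \ell^2(\ell^2-1)^2$, and hence the stated bound $\le 2\ell^6$.

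For \emph{part (ii)}, I will identify $B(\ell)/U'(\ell)$ with the set of diagonal tuples $(a_1,b_1,a_2,b_2)\in(\F_\ell^\times)^4$ satisfying $a_1b_1=a_2b_2$, modulo the free scalar action $(a_1,b_1,a_2,b_2)\sim(ca_1,cb_1,ca_2,cb_2)$ for $c\in\F_\ell^\times$: the unipotent factor $U(\ell)$ absorbs the off-diagonal entries, while $\Lambda(\ell)$ scales diagonals uniformly. Under this identification, the Borel trace condition reads $\alpha_1(a_1+b_1)+\alpha_2(a_2+b_2)=0$, and the eigenvalue condition in $\cal{C}(\ell)^{\alpha_1,\alpha_2}$ is automatic since each $a_j,b_j$ is a unit. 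For each of the $(\ell-1)^2$ choices of $(a_1,b_1)$, the pair $(a_2,b_2)$ has prescribed sum and product, hence lies among the at most two ordered root pairs of a fixed monic quadratic in $\F_\ell[X]$. Thus the total count is $\le 2(\ell-1)^2$, and dividing by $|\Lambda(\ell)|=\ell-1$ gives $|\widehat{\cal{C}}_{\text{Borel}}(\ell)^{\alpha_1,\alpha_2}|\le 2(\ell-1)$.

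For \emph{part (iii)}, Lemma~\ref{lemma-containment}(iii) guarantees that $\cal{C}_0(\ell)^{\alpha_1,\alpha_2}$ is a union of $\Lambda(\ell)$-cosets in $G(\ell)$, so its projection to $P(\ell)$ is $(\ell-1)$-to-one onto the image. Using the sharper intermediate bound $|\cal{C}_0(\ell)^{\alpha_1,\alpha_2}|\le \ell^2(\ell-1)^2(\ell+1)^2$ from part (i) then gives
\[
|\widehat{\cal{C}}_{\text{Proj}}(\ell)^{\alpha_1,\alpha_2}| \le \ell^2(\ell-1)(\ell+1)^2 \le 2\ell^5,
\]
the last inequality reducing to the elementary $(\ell^2-1)(\ell+1)\le 2\ell^3$, valid for all $\ell\ge 2$. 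I expect the main obstacle to be the conjugacy-class count in part (i); once that is in hand, parts (ii) and (iii) are a matter of bookkeeping supported by the structural Lemmas~\ref{lemma-subgroups} and \ref{lemma-containment}.
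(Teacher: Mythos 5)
Your proposal is correct and follows essentially the same approach as the paper: count, for each $M_1$, the matrices $M_2$ with prescribed characteristic polynomial using the $\ell(\ell+1)$-bound (the paper cites the Legendre-symbol formula from Feit--Fine where you rederive it by a conjugacy-class case analysis), reduce the Borel count to the torus and divide by $|U'(\ell)/U(\ell)|$, and divide the $\cal{C}_0$ count by the free $\Lambda(\ell)$-action for the projective count. Your explicit tracking of the intermediate bound $\ell^2(\ell^2-1)^2$ in parts (i) and (iii) is a slightly more careful version of the same computation needed to make the final $2\ell^5$ bound come out cleanly.
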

\begin{proof}
For parts (i), the case $\alpha_1=\alpha_2=1$ is 
\cite[Lemma 33, p. 51]{Wa23};
the general case is proved similarly, as we explain in what follows. 
We recall that,
for any $d\in \F_\ell^{\times}$ and  $t\in \F_\ell$,
 the number of matrices in $\GL_2(\F_\ell)$ with determinant $d$ and trace $t$ is $\ell\left(\ell+\left(\frac{t^2-4d}{\ell} \right)\right)$, where $\left(\frac{\cdot}{\ell}\right)$ denotes the Legendre symbol.
Therefore, 
\begin{eqnarray*}
|\cal{C}_0(\ell)^{\alpha_1,\alpha_2}| 
& =&
\ds \sum_{t\in \F_\ell} \sum_{d\in \F_\ell^{\times}}
\sum_{
M_1  \in \GL_2(\F_\ell)
\atop{\det M_1=d, \tr M_1=t}
}
\# \left\{
M_2\in \GL_2(\F_\ell): \det M_2=d, \tr M_2 =  - \alpha_2^{-1} \alpha_1 t (\mod \ell) 
\right\} 
\\
&\leq &
\ds 2 \sum_{t\in \F_\ell} \sum_{d\in \F_\ell^{\times}}
\sum_{
M_1 \in \GL_2(\F_\ell)
\atop{\det M_1=d, \tr M_1=t}
} \ell^2 
 \leq 2 \ell^{6},
\end{eqnarray*}
where $\alpha_2^{-1} (\mod \ell)$ is the inverse of $\alpha_2 (\mod \ell)$. 
Note that, since $\alpha_1$ and $\alpha_2$ are not both divisible by 
$\ell$, either  this inverse exists, or, if it does not,  the inverse of $\alpha_1 (\mod \ell)$ exists, in which case
 a similar argument  works using $\alpha_1^{-1} (\mod \ell)$.
This completes the proof of (i).

For part (ii), the case $\alpha_1=\alpha_2=1$ is \cite[Lemma 17, (iv), p.701]{CoWa23}
In the general case, we first consider the number of matrices in 
$
 \text{the image of $\cal{C}_{\text{Borel}}(\ell)^{\alpha_1,\alpha_2}$ in  $B(\ell)/U(\ell)\simeq T(\ell)$}.
$
They are clearly determined by the diagonal entries and can be counted as follows:
\begin{eqnarray*}
&&
\ds \sum_{a_1, a_2 \in  \F_\ell^{\times}}
\# \left\{
(b_1, b_2)  \in  \F_\ell^{\times}\times  \F_\ell^{\times}: b_1+b_2= - \alpha_2^{-1} \alpha_1(a_1+a_2) (\mod \ell), b_1b_2=a_1a_2 
\right\} 
\\
&\leq &
\ds 2 (\ell-1)^2,
\end{eqnarray*}
where $\alpha_2^{-1} (\mod \ell)$ is the inverse of $\alpha_2 (\mod \ell)$.  As before, if  the inverse does not exist, 
 a similar argument  works using $\alpha_1^{-1} (\mod \ell)$.
 
 Next, we observe that the inverse image of $\widehat{\cal{C}}_{\text{Borel}}(\ell)^{\alpha_1,\alpha_2}$ under the projection
 $B(\ell)/U(\ell) \to B(\ell)/U'(\ell)$ is  exactly $
 \text{the image of $\cal{C}_{\text{Borel}}(\ell)^{\alpha_1,\alpha_2}$ in  $B(\ell)/U(\ell)\simeq T(\ell)$}.
$
In all, 
\[
|\widehat{\cal{C}}_{\text{Borel}}(\ell)^{\alpha_1,\alpha_2}| \leq \frac{2 (\ell -1)^2}{|U'(\ell)/U(\ell)|}\leq 2 (\ell -1).
\]


Finally, from part (iii) of Lemma \ref{lemma-sizes} and part (i) of the current lemma, we deduce that
$|\widehat{\cal{C}}_{\text{Proj}}(\ell)^{\alpha_1,\alpha_2}| \leq \frac{|\cal{C}_0(\ell)^{\alpha_1,\alpha_2}|}{|\Lambda(\ell)|}\leq 2\ell^5$.
This completes the proof of  (iii).
\end{proof}
We are now ready to prove Theorem \ref{theorem3}.

\medskip 

\subsection{Proof of part (i) of Theorem \ref{theorem3}}\label{Subsection-shared-traces-proof-i}
The key ingredient is the unconditional effective Chebotarev density theorem
of Lagarias and Odlyzko \cite[Theorem 1.3, pp. 413--414]{LaOd77}, in the version 
 stated in \cite[Th\'{e}or\`{e}me 2, p. 132]{Se81}. 

We fix a rational prime $\ell $ such that $\ell > c(A, K)$
and 
such that $\ell$ does not divide at least one of $\alpha_1, \alpha_2$.
From (\ref{Galois-Image}) and (\ref{Galois-Trace}), we deduce that
\begin{equation}\label{second-relation-plus-GAP}
{\cal{T}}_{E_1, E_2}^{\alpha_1, \alpha_2}(x)
\leq
\pi_{\cal{C}_0(\ell)^{\alpha_1, \alpha_2} }\left(x, K(A[\ell])/K\right)+  n_K+\log M\left(K/\Q\right).
\end{equation}
In what follows, we bound from above the function on the right hand side of the inequality.

First, we relate
$\pi_{\cal{C}_0(\ell)^{\alpha_1, \alpha_2} }\left(x, K(A[\ell])/K\right)$
to 
 $\pi_{\widehat{\cal{C}}_{\text{Proj}}(\ell)^{\alpha_1, \alpha_2} }\left(x, K(A[\ell])^{\Lambda(\ell)}/K\right)$
by appealing to 
\cite[Proposition 7, p. 138, and Proposition 8 (b), p. 140]{Se81}), in the version stated in \cite[Corollary 5, p. 693]{CoWa23}.
Part (iii) of Lemma \ref{lemma-containment} 
ensures that we may apply 
these results
to 
the Galois group $G(\ell) = \Gal(K(A[\ell])/K)$,
 its normal subgroup $\Lambda(\ell) = \Gal(K(A[\ell])/K(A[\ell])^{\Lambda(\ell)})$,
 and
 the set
 $\cal{C}_0(\ell)^{\alpha_1, \alpha_2}$. 
  We deduce that
\begin{eqnarray*}\label{pi-versus-pi-tilde-upper-plus}
\pi_{
\cal{C}_0(\ell)^{\alpha_1, \alpha_2} 
}
\left(x, K(A[\ell])/K\right)
&\ll&
\pi_{
\widehat{\mathcal{C}}_{\text{Proj}}(\ell)^{\alpha_1, \alpha_2}
}
\left(x, K(A[\ell])^{\Lambda(\ell)}/K\right)
\\
&+&
n_K\left(\frac{x^{\frac{1}{2}}}{\log x} 
+ 
\log M(K(A[\ell])/K)
+
 \log M (K(A[\ell])^{\Lambda(\ell)}/K)\right).
\end{eqnarray*}

To bound $\log M\left(K(A[\ell])/K\right)$ and 
$\log M\left(K(A[\ell])^{\Lambda(\ell)}/K\right)$,
we proceed as in  \cite[(41), p. 708]{CoWa23}.
Specifically, 
relying on 
\cite[Proposition 6, p. 130]{Se81},
on  Lemma \ref{lemma-sizes},
 and
 on the  N\'eron–Ogg–Shafarevich criterion for abelian varieties,
we obtain that
 $$\log M\left(K(A[\ell])/K\right) \ll \frac{\log (\ell N_1 N_2 d_K)}{n_K},$$
 $$\log M\left(K(A[\ell])^{\Lambda(\ell)}/K\right) \ll \frac{\log (\ell N_1 N_2 d_K)}{n_K}.$$

To estimate
the counting function
$\pi_{
\widehat{\mathcal{C}}_{\text{Proj}}(\ell)^{\alpha_1, \alpha_2}
}
\left(x, K(A[\ell])^{\Lambda(\ell)}/K\right)$,
we apply \cite[Th\'{e}or\`{e}me 2, p. 132]{Se81} 
and obtain that
there exists an absolute, effectively computable,  positive constant $a_0$ such that,
 if 
 \begin{equation}\label{unconditional-restriction-ell}
 \log x > a_0 n_{K(A[\ell])^{\Lambda(\ell)}} \left(\log |d_{K(A[\ell])^{\Lambda(\ell)}}| \right)^2,
 \end{equation}
  then, for any $b>1$,
\begin{equation*}
\pi_{
\widehat{\mathcal{C}}_{\text{Proj}}(\ell)^{\alpha_1, \alpha_2}
}
\left(x, K(A[\ell])^{\Lambda(\ell)}/K\right) \ll_b  \frac{|\widehat{\mathcal{C}}_{\text{Proj}}(\ell)^{\alpha_1, \alpha_2}|}{|P(\ell)|} \li(x) 
+ \left|\left( \widehat{\mathcal{C}}_{\text{Proj}}(\ell)^{\alpha_1, \alpha_2} \right)^{\#}\right| \frac{x}{(\log x)^b},
\end{equation*}
where $\left( \widehat{\mathcal{C}}_{\text{Proj}}(\ell)^{\alpha_1, \alpha_2} \right)^{\#}$ 
is the set of conjugacy classes in $\widehat{\mathcal{C}}_{\text{Proj}}(\ell)^{\alpha_1, \alpha_2}$ 
and $\li(x) := \ds\int_{2}^x \frac{1}{\log t} \ d t$ is the logarithmic integral function.
Then, by Lemmas \ref{lemma-sizes} - \ref{lemma-sizes-sets},
we deduce that 
\begin{equation*}
\pi_{
\widehat{\mathcal{C}}_{\text{Proj}}(\ell)^{\alpha_1, \alpha_2}
}
\left(x, K(A[\ell])^{\Lambda(\ell)}/K\right) \ll_b \frac{x}{\ell \log x} +
\ell^2 \frac{x}{(\log x)^b}.
\end{equation*}

Finally,  we choose the prime $\ell=\ell(x)$  
such that $\ell > c(A, K)$,
such that
$\ell \nmid \alpha_1 \alpha_2$, 
or 
$\alpha_1  = 0$, $\ell \nmid \alpha_2$,
or
$\alpha_2  = 0$, $\ell \nmid \alpha_1$,
such that 
(\ref{unconditional-restriction-ell}) is satisfied,
and such that the final bounds are optimal, as follows.  

Once more relying on 
\cite[Proposition 5, p. 129]{Se81},
 Lemma \ref{lemma-sizes},
 and the  N\'eron–Ogg–Shafarevich criterion for abelian varieties,
we obtain that
\begin{align*}
n_{K(A[\ell])^{\Lambda(\ell)}}\left(\log |d_{K(A[\ell])^{\Lambda(\ell)}}| \right)^2 
& \leq |P(\ell)|n_K\left( (|P(\ell)|n_K-1)\log (\ell N_1N_2d_K) +(|P(\ell)|n_K-1)\log|P(\ell)n_K-1|  \right)^2\\
&\ll n_K^3 \ell^{18} (\log  (\ell N_1N_2d_K))^2.
\end{align*}

From  \cite[Lemma 7.1, p. 409]{Lo16},
we know that 
there exists an effectively computable, positive  constant $a(h_A, n_K)$, which depends on the Faltings height $h_A$ of $A$ and on  $n_K$,
such that,
  if $\ell>a(h_A, n_K)$,
 then (\ref{gal-div-field})
holds. 
Hence 
 condition
 (\ref{unconditional-restriction-ell})
 on $\ell$
 is
 ensured by the restrictions
\begin{equation*}\label{unconditional-restriction-ell-new}
a_1(h_A, n_K)
< 
\ell^{18} (\log \ell) ^2
< a_2(h_A, n_K, d_K, N_1, N_2) \log x
\end{equation*}
for some positive constants $a_1(h_A, n_K)$ and $a_2(h_A, n_K, d_K, N_1, N_2)$, which depend on $h_A$, $n_K$, $d_K$, $N_1$, and  $N_2$.
By taking $x > x_0(h_A, n_K, d_K,   N_1, N_2)$ for some positive real number which depends on $h_A$, $n_K$, $d_K$,  $N_1$, and $N_2$,
 we may choose the prime $\ell$ such that
\[
\ell(x)= 
\left[a_3 \frac{(\log x)^{\frac{1}{18}}}{(\log \log x)^{\frac{1}{9}}}\right]
\]
for some positive constant $a_3 = a_3(h_A, n_K, d_K, N_1, N_2, \alpha_1, \alpha_2)$, which depends on $h_A$, $n_K$,  $d_K$, $N_1$, $N_2$, $\alpha_1$, and $\alpha_2$.

Putting the bounds together, we deduce that 
\begin{equation*}\label{unknown-relation-plus}
{\cal{T}}_{E_1, E_2}^{\alpha_1, \alpha_2}(x) < \kappa_0(E_1, E_2, K, \alpha_1, \alpha_2) \frac{ x (\log\log x)^{\frac{1}{9}}}{ (\log x)^{\frac{19}{18}}} 
\end{equation*}
for some  positive constant $\kappa_0(E_1, E_2, K, \alpha_1, \alpha_2)$, which depends 
on $E_1$, $E_2$,  $K$, $\alpha_1$, and $\alpha_2$.
This completes the proof of part (i) of Theorem \ref{theorem3}.

\medskip

\subsection{Proof of part (iii) of Theorem \ref{theorem3}}\label{Subsection-shared-traces-proof-iii}

The key ingredient is
the conditional effective Chebotarev density theorem 
proved in
 \cite[Theorem 1.2, p. 402]{MuMuWo18},
which we use  in the reformulation stated in
  \cite[Theorem 7, p. 12]{CoWa22}).  

We fix a rational prime $\ell $ such that $\ell > c(A, K)$
and 
such that  $\ell$ does not divide at least one of $\alpha_1, \alpha_2$.
As in the proof of part (i),
after using
(\ref{second-relation-plus-GAP}), 
we focus our attention on estimating, from above,
$\pi_{\cal{C}_0(\ell)^{\alpha_1, \alpha_2} }\left(x, K(A[\ell])/K \right)$,
 this time under the assumptions of GRH, AHC, and PCC.

First, we proceed identically to part (i) and  deduce that
\begin{equation*}\label{pi-versus-pi-tilde-upper-plus-PCC}
\pi_{
\cal{C}_0(\ell)^{\alpha_1, \alpha_2} 
}
\left(x, K(A[\ell])/K\right)
\ll
\pi_{
\widehat{\mathcal{C}}_{\text{Proj}}(\ell)^{\alpha_1, \alpha_2}
}
\left(x, K(A[\ell])^{\Lambda(\ell)}/K \right)
+
n_K \left(\frac{x^{\frac{1}{2}}}{\log x} 
+ 
\frac{\log (\ell N_1 N_2 d_K)}{n_K}\right).
\end{equation*}
 Next, 
we apply 
\cite[Theorem 1.2, p. 402]{MuMuWo18} (which requires GRH, AHC, and PCC)
to estimate
the counting function
$\pi_{
\widehat{\mathcal{C}}_{\text{Proj}}(\ell)^{\alpha_1, \alpha_2}
}
\left(x, K(A[\ell])^{\Lambda(\ell)}/K\right)$.
By putting all estimates together, we deduce that 
\begin{eqnarray*}
\pi_{\cal{C}_0(\ell)^{\alpha_1, \alpha_2} }\left(x, K(A[\ell])/K\right) 
&\ll&
\frac{\left|\widehat{\cal{C}}_{\text{Proj}}(\ell)^{\alpha_1, \alpha_2} \right|}{|P(\ell)|} \cdot \frac{x}{\log x}
\\
&+&
n_K^{\frac{1}{2}}\left|\widehat{\cal{C}}_{\text{Proj}}(\ell)^{\alpha_1, \alpha_2} \right|^{\frac{1}{2}}
\left(\frac{\left| P(\ell)^{\#}\right|}{|P(\ell)|} \right)^{\frac{1}{2}}x^{\frac{1}{2}}
 \left(\frac{\log \left(\ell N_1 N_2 d_K\right)}{n_K}+\log  x\right)
\\
&+&
n_K\left(\frac{x^{\frac{1}{2}}}{\log x} 
+ 
\frac{\log (\ell N_1 N_2 d_K)}{n_K}\right).
\end{eqnarray*}
Then,  using 
Lemmas \ref{lemma-sizes} - \ref{lemma-sizes-sets},
we infer that
\begin{equation*}\label{GRH-AHC-PCC-plus}
\pi_{\cal{C}_0(\ell)^{\alpha_1, \alpha_2} }\left(x, K(A[\ell])/K\right) \ll
\frac{x}{\ell \log x}
+
n_K^{\frac{1}{2}} \ell^{\frac{1}{2}} 
x^{\frac{1}{2}} \left(\frac{\log (\ell N_1 N_2 d_K)}{n_K} +\log x\right).
\end{equation*}

Reasoning as in part (i), 
we may choose the prime $\ell$ such that
\begin{equation}\label{ell-x}
\ell(x) = \left[a_4 \frac{x^{\frac{1}{3}}}{(\log x)^{\frac{4}{3}}} \right]
\end{equation}
for some  positive constant $a_4 = a_4(h_A, n_K, d_K, N_1, N_2, \alpha_1, \alpha_2)$,
 which depends on $h_A$,  $n_K$,  $d_K$, $N_1, N_2$,
$\alpha_1$,  and $\alpha_2$.
Finally, recalling
(\ref{second-relation-plus-GAP}), 
we obtain that
\begin{equation*}\label{third-relation-plus}
{\cal{T}}_{E_1, E_2}^{\alpha_1, \alpha_2}(x)
\leq \kappa_0''(E_1, E_2, K, \alpha_1, \alpha_2) x^{\frac{2}{3}}(\log x)^{\frac{1}{3}}
\end{equation*}
for some  positive constant $\kappa_0''(E_1, E_2, K, \alpha_1, \alpha_2)$, which depends 
on $E_1$, $E_2$,  $K$, $\alpha_1$, and $\alpha_2$.
This completes the proof of part (iii) of Theorem \ref{theorem3}.

\medskip 
\subsection{Proof of part (ii) of Theorem \ref{theorem3}}\label{Subsection-shared-traces-proof-ii}

We base our proof on two key ingredients,
a modification of  \cite[Lemma 9, pp. 694--695]{CoWa23}
and
\cite[Theorem 2.3, p. 240]{Zy15},
as we explain below.

The first 
key ingredient is the following minor modification of   \cite[Lemma 9, pp. 694--695]{CoWa23},
which itself is  a generalization of 
 \cite[Lemma 4.4, p. 269]{MuMuSa88}. 
\begin{lemma}\label{max-lemma}
Let ${\cal{S}}$ be a non-empty set of prime ideals of $K$, 
let $(K_\fp)_{\fp \in {\cal{S}}}$ be a family of finite Galois extensions of $\Q$,
and
let $(\cal{C}_\fp)_{\fp\in \cal{S}}$ be a family of non-empty sets such that
 each $\cal{C}_\fp$  is a union  of conjugacy classes of $\Gal(K_\fp/\Q)$.
Assume that
there exist an absolute constant $c_1 > 0$
and 
a  function $f: \R \to (0, \infty)$
such that

\begin{equation}\label{hypothesis1}
\ds
n_{K_\fp}
\leq
c_1,
\end{equation}

\begin{equation}\label{hypothesis0}
\ds \log |d_{K_\fp}| 
\leq
 f(z)
 \text{ for all  $\fp$ such that } \Nr_K(\fp) \leq z.
\end{equation}
 For each 
 $x>2$, let $y = y(x) > 2$, $u =u(x) > 2$ be such that
\begin{equation}\label{hypothesis3}
u \leq y,
\end{equation}
and assume that, for any $\varepsilon > 0$,
\begin{equation}\label{hypothesis4}
u \geq 
c_2(\varepsilon) y^{\frac{1}{2}} (\log y)^{2 + \varepsilon} 
\
\text{for some constant} \ 
c_2(\varepsilon) > 0
\end{equation}
and
\begin{equation}\label{hypothesis5}
\ds\lim_{x \rightarrow \infty} \frac{f(x)}{(\log y)^{1 + \varepsilon}} = 0.
\end{equation}
Assume GRH for Dedekind zeta functions. 
Then, for any $\varepsilon > 0$,
there exists a constant $c(\varepsilon) > 0$ such that, for any sufficiently large $x$,
\begin{equation}\label{max-lemma-bound}
\#\left\{\fp: \Nr_K(\fp) \leq x, \fp \in {\cal{S}}\right\}
\leq
c(\varepsilon)
\ds\max_{
y \leq \ell \leq y + u
} 
\#\left\{\fp: \Nr_K(\fp) \leq x, \fp \in {\cal{S}}, \ell \nmid d_{K_\fp}, \left(\frac{K_\fp/\Q}{\ell} \right)\subseteq \cal{C}_\fp \right\}.
\end{equation}
\end{lemma}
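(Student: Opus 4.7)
The plan is to use a double counting argument linking primes $\mathfrak{p}\in\mathcal{S}$ to rational primes $\ell$ in the short window $[y,y+u]$. For each $\mathfrak{p}\in\mathcal{S}$ with $\Nr_K(\mathfrak{p})\leq x$, define the auxiliary quantity
$$
N(\mathfrak{p}) \;:=\; \#\left\{\ell\ \text{rational prime}: y\leq\ell\leq y+u,\ \ell\nmid d_{K_\mathfrak{p}},\ \left(\tfrac{K_\mathfrak{p}/\Q}{\ell}\right)\subseteq\mathcal{C}_\mathfrak{p}\right\}.
$$
The strategy has two components: first, establish a uniform lower bound $N(\mathfrak{p})\gg u/\log y$ valid for every such $\mathfrak{p}$; second, invert the order of summation to pass from a sum over $\mathfrak{p}$ of $N(\mathfrak{p})$ to a sum over $\ell$ of counts indexed by $\mathfrak{p}$, and compare the latter to the right-hand side of (\ref{max-lemma-bound}).

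For the lower bound on $N(\mathfrak{p})$, I would invoke the effective Chebotarev density theorem of Lagarias and Odlyzko, applied to $K_\mathfrak{p}/\Q$ under GRH for $\zeta_{K_\mathfrak{p}}$, in the form
$$
\pi_{\mathcal{C}_\mathfrak{p}}(z,K_\mathfrak{p}/\Q) \;=\; \frac{|\mathcal{C}_\mathfrak{p}|}{|\Gal(K_\mathfrak{p}/\Q)|}\li(z)\;+\;O\!\left(|\mathcal{C}_\mathfrak{p}|^{1/2}z^{1/2}\bigl(\log|d_{K_\mathfrak{p}}|+n_{K_\mathfrak{p}}\log z\bigr)\right),
$$
then subtract the estimates at $z=y$ and $z=y+u$. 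By hypothesis (\ref{hypothesis3}) one has $\log(y+u)\asymp\log y$, so the main term equals $(|\mathcal{C}_\mathfrak{p}|/|\Gal(K_\mathfrak{p}/\Q)|)\cdot(u/\log y)(1+o(1))$. Hypothesis (\ref{hypothesis1}) gives $n_{K_\mathfrak{p}}\leq c_1$, so $|\Gal(K_\mathfrak{p}/\Q)|\leq c_1!$, and since $\mathcal{C}_\mathfrak{p}$ is non-empty the ratio is $\geq 1/c_1!$, making the main term $\gg u/\log y$. Hypotheses (\ref{hypothesis0}) and (\ref{hypothesis5}) give $\log|d_{K_\mathfrak{p}}|\leq f(x)=o((\log y)^{1+\varepsilon})$, so the error term is $\ll y^{1/2}(\log y)^{1+\varepsilon}$, which is dominated by the main term precisely when $u\gg y^{1/2}(\log y)^{2+\varepsilon}$, i.e., exactly hypothesis (\ref{hypothesis4}). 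Removing the primes $\ell\mid d_{K_\mathfrak{p}}$ costs at most $\omega(d_{K_\mathfrak{p}})\ll\log|d_{K_\mathfrak{p}}|/\log\log|d_{K_\mathfrak{p}}|$, which is negligible against the main term under the same hypotheses. Together these yield $N(\mathfrak{p})\gg_\varepsilon u/\log y$, uniformly in $\mathfrak{p}$.

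For the second step, the double counting identity
$$
\sum_{\substack{\mathfrak{p}\in\mathcal{S}\\ \Nr_K(\mathfrak{p})\leq x}} N(\mathfrak{p}) \;=\; \sum_{\substack{\ell\ \text{prime}\\ y\leq\ell\leq y+u}} \#\!\left\{\mathfrak{p}\in\mathcal{S}:\Nr_K(\mathfrak{p})\leq x,\ \ell\nmid d_{K_\mathfrak{p}},\ \left(\tfrac{K_\mathfrak{p}/\Q}{\ell}\right)\subseteq\mathcal{C}_\mathfrak{p}\right\}
$$
combined with the uniform lower bound from the previous step and the upper bound $\pi(y+u)-\pi(y)\ll u/\log y$ (valid unconditionally for $u\leq y$ by Brun--Titchmarsh) gives
$$
\#\{\mathfrak{p}\in\mathcal{S}:\Nr_K(\mathfrak{p})\leq x\}\cdot\frac{u}{\log y} \;\ll\; \frac{u}{\log y}\cdot\max_{y\leq\ell\leq y+u}\#\{\cdots\},
$$
and (\ref{max-lemma-bound}) follows after dividing through by $u/\log y$. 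The main technical obstacle is ensuring uniformity of the Chebotarev error term as $\mathfrak{p}$ varies over $\mathcal{S}$: the quantitative hypotheses (\ref{hypothesis1}), (\ref{hypothesis0}), (\ref{hypothesis4}), (\ref{hypothesis5}) are calibrated exactly so that the Lagarias--Odlyzko error on each $K_\mathfrak{p}/\Q$ is dominated by $u/\log y$ simultaneously for every prime $\mathfrak{p}$ of norm at most $x$, and verifying this uniformity — together with the fact that the additional deletion of primes dividing $d_{K_\mathfrak{p}}$ costs an amount negligible compared to $u/\log y$ — is the crux of the argument.
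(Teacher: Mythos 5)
Your proposal is correct and follows essentially the same double-counting argument used in \cite[Lemma 9]{CoWa23} and its ancestor \cite[Lemma 4.4]{MuMuSa88}, which is what the paper cites without reproving: lower-bound the number of auxiliary primes $\ell$ in the window $[y, y+u]$ hitting the prescribed Chebotarev condition uniformly via GRH-conditional Lagarias--Odlyzko applied to each $K_\mathfrak{p}/\Q$, then interchange the two sums and divide out by the window length. Two small remarks: (a) since $K_\mathfrak{p}/\Q$ is Galois, $|\Gal(K_\mathfrak{p}/\Q)| = n_{K_\mathfrak{p}} \leq c_1$ exactly, so $|\mathcal{C}_\mathfrak{p}|/|\Gal(K_\mathfrak{p}/\Q)| \geq 1/c_1$ and the factor $c_1!$ is unnecessary (though harmless); (b) the Chebotarev counting function $\pi_{\mathcal{C}_\mathfrak{p}}$ already excludes ramified primes by definition, so the $\omega(d_{K_\mathfrak{p}})$ correction you perform is not strictly needed, but including it does no harm and your bound on it is valid.
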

 
 We apply this lemma to the set
${\cal{S}}^{\alpha_1, \alpha_2} := \{\fp:  \gcd(\Nr_K(\fp), 6 N_1 N_2)=1, \alpha_1 a_\fp(E_1) + \alpha_2 a_\fp(E_2) = 0\}$,
to the fields
$K_\fp := \Q(\pi_\fp(E_1), \pi_\fp(E_2))$, 
to the conjugacy classes
${\cal{C}}_\fp := \{\id_{K_\fp}\}$,
and 
to the function
$f(v) := 2\log (4v)$.
Note that, for ${\cal{S}}^{1, 1}$, this application is precisely the case  
$g = 2$ of \cite[Lemma 18, pp. 704--705]{CoWa23}. 
We obtain that, under the Riemann Hypothesis for the Riemann zeta function and GRH for the Dedekind zeta functions
of the number fields $K_\fp$, 
the following holds.

 For a fixed arbitrary $x > 2$, let $y := y(x)$ and $u := u(x)$ be real numbers such that
 $2 < u(x) < y(x)$. Assume that, 
 for any $\varepsilon > 0$,
  $\ds\lim_{x \rightarrow \infty} \frac{\log x}{(\log y(x))^{1 + \varepsilon}} = 0$
  and 
  there exists a positive constant $c'(\varepsilon)$ such that, for any sufficiently large $x$, 
 $u(x) \geq c'(\varepsilon) y(x)^{\frac{1}{2}} (\log y(x))^{2 + \varepsilon}$.
 Then, upon fixing an arbitrary $\varepsilon > 0$, there exist
 a positive constant $c(\varepsilon)$ and a positive real number $x_{\varepsilon}$ such that, for any $x \geq x_{\varepsilon}$
 and any $y(x)$, $u(x)$ satisfying the above conditions, we have
\begin{eqnarray*}\label{first-relation-plus}
{\cal{T}}_{E_1, E_2}^{\alpha_1, \alpha_2}(x)
\leq
c(\varepsilon)
\max_{
y \leq \ell \leq y + u}
\#\left\{
\fp: \Nr_K(\fp) \leq x, \gcd(\Nr_K(\fp), 6 N_1 N_2)=1, \alpha_1 a_\fp(E_1) + \alpha_2 a_\fp(E_2) = 0,
\right.
\\
&&
\hspace*{-4cm}
\left.
 \ell \ \text{splits completely in} \ K_\fp
\right\}.
\end{eqnarray*}
From (\ref{Galois-Image}), (\ref{Galois-Trace}),  and (\ref{gal-div-field}),
we deduce
that
\begin{equation}\label{second-relation-plus}
{\cal{T}}_{E_1, E_2}^{\alpha_1, \alpha_2}(x)
\leq
c(\varepsilon)
\max_{
y \leq \ell \leq y + u}
\pi_{ {\cal{C}}(\ell)^{\alpha_1, \alpha_2}  }\left(x, K(A[\ell])/K\right).
\end{equation}

The second key ingredient in our proof is
\cite[Theorem 2.3, p. 240]{Zy15} 
(see also its restatements \cite[Theorem 7, p. 693, and Corollary 8, p. 694]{CoWa23})). 
We will use this result to obtain upper bounds for the right hand side of  
(\ref{second-relation-plus}).

As in the proofs of parts (i) and (iii), we fix a rational prime $\ell$ such that $\ell > c(A, K)$
and 
such that $\ell$ does not divide at least one of $\alpha_1, \alpha_2$.

Parts (i) and (ii) of Lemma \ref{lemma-containment}  show that the hypotheses about
${\cal{C}}(\ell)^{\alpha_1, \alpha_2}$   
 needed to apply \cite[Theorem 2.3, p. 240]{Zy15}
 are satisfied.
Since $\Gal\left(K(A[\ell])^{U'(\ell)}/K(A[\ell])^{B(\ell)}\right) \simeq B(\ell)/U'(\ell)$ is abelian, 
AHC holds for the extension $K(A[\ell])^{U'(\ell)}/K(A[\ell])^{B(\ell)}$.
 Then, assuming GRH for the Dedekind zeta function of $K(A[\ell])^{U'(\ell)}$,
 by applying
 \cite[Theorem 2.3, p. 240]{Zy15},
 we obtain  that  
\begin{eqnarray*}
\pi_{ {\cal{C}}(\ell)^{\alpha_1, \alpha_2}  }\left(x, K(A[\ell])/K\right)
&\ll&
\frac{\left|\widehat{\cal{C}}_{\text{Borel}}(\ell)^{\alpha_1, \alpha_2}\right| \cdot |U'(\ell)|}{|B(\ell)|} \cdot \frac{x}{\log x}
\\
&+&
\left|\widehat{{\cal{C}}}_{\text{Borel}}(\ell)^{\alpha_1, \alpha_2}\right|^{\frac{1}{2}} [K(A[\ell])^{B(\ell)} :K] \frac{x^{\frac{1}{2}}}{\log x} \log M\left(K(A[\ell])^{U'(\ell)}/K(A[\ell])^{B(\ell)}\right)
\\
&+&
n_K\left(\frac{x^{\frac{1}{2}}}{\log x} + \log M\left(K(A[\ell])/K\right)\right)
\\
&+&
n_{K(A[\ell])^{B(\ell)}} \left(\frac{x^{\frac{1}{2}}}{\log x} 
+ \log M\left(K(A[\ell])^{U'(\ell)}/K(A[\ell])^{B(\ell)}\right)\right).
\end{eqnarray*}
To bound $|U'(\ell)|$ and $|B(\ell)|$, 
we use 
 Lemma \ref{lemma-sizes}.
To bound
$\left|\widehat{\cal{C}}_{\text{Borel}}(\ell)^{\alpha_1, \alpha_2}\right|$,
we  use
 Lemma \ref{lemma-sizes-sets}.
To bound 
$\log M\left(K(A[\ell])/K\right)$
and
$\log M\left(K(A[\ell])^{U'(\ell)}/K(A[\ell])^{B(\ell)}\right)$,
we proceed as in
parts (i) and (iii)
and obtain
\begin{center}
$\log M\left(K(A[\ell])/K\right) \ll \frac{\log (\ell N_1 N_2 d_K)}{n_K},$
$\log M\left(K(A[\ell])^{U'(\ell)}/K(A[\ell])^{B(\ell)}\right) \ll \frac{\log (\ell N_1 N_2 d_K)}{n_K}$.
\end{center}
Altogether, we deduce that
\begin{equation*}\label{GRH-AHC-plus}
\pi_{ {\cal{C}}(\ell)^{\alpha_1, \alpha_2}  }\left(x, K(A[\ell])/K \right)
\ll
\frac{x}{\ell \log x}
+
 \ell^{\frac{5}{2}} 
\frac{x^{\frac{1}{2}}}{\log x}\cdot  \frac{\log (\ell N_1 N_2 d_K)}{n_K}.
\end{equation*}

Now, we use
(\ref{second-relation-plus})
and infer that
\begin{equation*}\label{third-relation-plus}
{\cal{T}}_{E_1, E_2}^{\alpha_1, \alpha_2}(x)
\leq
c(\varepsilon)
\left(
\frac{x}{y(x) \log x}
+
 (y(x) + u(x))^{\frac{5}{2}} 
\frac{x^{\frac{1}{2}}}{\log x} \cdot \frac{\log ((y(x) + u(x)) N_1 N_2 d_K)}{n_K}
\right).
\end{equation*}

Finally, by invoking  \cite[Lemma 7.1, p. 409]{Lo16}  and recalling our constraints on $u(x)$ and $y(x)$, we choose 
$$y(x)= \left[a_5 \frac{x^{\frac{1}{7}}}{(\log x)^{\frac{2}{7}}}\right], \quad
u(x) =\left[a_6 y(x)^{\frac{1}{2}} (\log y(x))^{2 + \varepsilon}\right]$$
for some positive constants
 $a_5 = a_5(h_A, n_K, d_K, N_1, N_2, \alpha_1, \alpha_2)$ 
and $a_6 = a_6(h_A, n_K, d_K, N_1, N_2, \alpha_1, \alpha_2)$,
which  depend  on $h_A$, $n_K$, $d_K$, $N_1, N_2$, $\alpha_1$, and $\alpha_2$.
We deduce that
\begin{equation*}\label{fourth-relation-plus}
{\cal{T}}_{E_1, E_2}^{\alpha_1, \alpha_2}(x)
\leq
\kappa_0'(E_1, E_2, K, \alpha_1, \alpha_2) 
\frac{x^{\frac{6}{7}}}{(\log x)^{\frac{5}{7}}}
\end{equation*}
for some  positive constant $\kappa_0'(E_1, E_2, K, \alpha_1, \alpha_2)$ which depends 
on $E_1$, $E_2$,  $K$, $\alpha_1$, and $\alpha_2$.
This completes the proof of part (ii) of Theorem \ref{theorem3}.

\section{Elliptic curves with shared  Frobenius fields}\label{Section-shared-fields}

Let $E_1$ and $E_2$ be elliptic curves over a number field $K$, 
  without complex multiplication,
  and not potentially isogenous.
We keep the associated notation from the previous sections and prove Theorem \ref{theorem1}.

By Lemma \ref{Frob-fields-to-traces}, 
for any sufficiently large $x$, we have
\begin{eqnarray}\label{inequality-T-to-F}
&&
\hspace*{1.5cm}
{\cal{F}}_{E_1, E_2}(x)
\leq
{\cal{T}}_{E_1, E_2}^{1, 1}(x)
+
{\cal{T}}_{E_1, E_2}^{1, -1}(x)
\\
&&
\hspace*{0.5cm}
+
\ds\sum_{1 \leq j \leq 2}
\#\left\{
\fp:
\Nr_K(\fp) \leq x, \gcd(\Nr_{K}(\fp), 6 N_1 N_2)=1,
\fp \ \text{a degree one prime},
\Q(\pi_{\fp}(E_j)) \in \left\{\Q(i),  \Q\left(i \sqrt{3}\right)\right\}
\right\}
\nonumber
\\
&&
\hspace*{0.5cm}
+
\
\#
\left\{
\fp: 
\Nr_K(\fp) \leq x, 
\Nr_{K}(\fp) = p^f \ \text{for some rational prime} \ p \ \text{and some integer} \ f \geq 2
\right\}.
\nonumber
\end{eqnarray}
Note that the last term   is  bounded from above by 
$c x^{\frac{1}{2}}$ for some positive constant $c$ as explained in \cite[Proposition 7, p. 138]{Se81}.

\medskip
\noindent


\noindent
(i) For each of the first two terms  on the right hand side of inequality (\ref{inequality-T-to-F}), we invoke part (i) of Theorem \ref{theorem3}
and obtain the combined upper bound 
$\kappa_0(E_1, E_2, K) \frac{ x (\log\log x)^{\frac{1}{9}}}{ (\log x)^{\frac{19}{18}}}$
for some positive constant $\kappa_0(E_1, E_2, K)$, which depends on $E_1$, $E_2$, and $K$.
For each of the next two terms 
in the  sum over $1 \leq j \leq 2$
 on the right hand side of inequality (\ref{inequality-T-to-F}), we invoke a modification of 
 \cite[Theorem 1.3 (ii), p. 236]{Zy15} applied to the elliptic curve $E_j$ defined over $K$
  by counting only  degree one primes of norm at most $x$.
 This modification 
  relies on a variation of \cite[Lemma 5.1, p. 246]{Zy15} applied to $E_j$ defined over $K$  by counting only  degree one primes.
We obtain the upper bound 
$\kappa_1(E_j, K) \frac{x (\log \log x)^{2}}{(\log x)^2}$ for some positive constant
$\kappa_1(E_j, K)$, which depends on $E_j$ and $K$.
Putting everything together gives part (i) of Theorem \ref{theorem1}.


\medskip
\noindent
(ii) For each of the first two terms  on the right hand side of inequality (\ref{inequality-T-to-F}), we invoke part (ii) of Theorem \ref{theorem3}
and obtain the combined upper bound 
$2 \kappa_0'(E_1, E_2, K) \frac{x^{\frac{6}{7}}}{(\log x)^{\frac{5}{7}}}$ for some positive constant $\kappa_0'(E_1, E_2, K)$, which depends on $E_1$, $E_2$, and $K$.
For each of the next two terms 
in the  sum over $1 \leq j \leq 2$,  on the right hand side of inequality (\ref{inequality-T-to-F}), 
we invoke a  modification of  
\cite[Theorem 1.3 (i), p. 236]{Zy15} applied  to $E_j$  defined over $K$ by counting only  degree one primes,  as before.
We obtain the upper bound $\kappa_1'(E_j, K)\frac{x^{\frac{4}{5}}}{(\log x)^{\frac{3}{5}}}$
 for some positive constant
$\kappa_1'(E_j, K)$,  which depends on $E_j$ and $K$.
Putting everything together gives part (ii) of Theorem \ref{theorem1}.

\medskip
\noindent
(iii) For each of the first two terms  on the right hand side of inequality (\ref{inequality-T-to-F}), we invoke part (iii) of Theorem \ref{theorem3}
and obtain the combined upper bound 
$\kappa_0''(E_1, E_2, K) x^{\frac{2}{3}} (\log x)^{\frac{1}{3}}$
 for some positive constant $\kappa_0''(E_1, E_2, K)$, which depends on $E_1$, $E_2$, and $K$.
For each of the next two terms 
in the  sum over $1 \leq j \leq 2$ on the right hand side of inequality (\ref{inequality-T-to-F}), 
we invoke two modifications of \cite[Corollary 1.6, p. 406]{MuMuWo18} applied to $E_j$ defined over $K$
 by counting only  degree one primes.
The first modification is a variation of the proof ingredient  \cite[Lemma 15, p. 1548]{CoDa08}), 
which we make in order to work with an elliptic curve over $K$ and to count degree one primes of $K$.
The second modification is a variation of
 the argument in the proof of the second part of 
\cite[Corollary 1.6, p. 406]{MuMuWo18},
which we make in order
 to improve the resulting bound 
$x^{\frac{2}{3}} (\log x)^{\frac{1}{2}}$
to 
$x^{\frac{2}{3}} (\log x)^{\frac{1}{3}}$, as follows.
Letting $\ell(x)$ be as in (\ref{ell-x}),
instead of as in \cite[p. 422]{MuMuWo18}, we deduce that
each of the terms  on the right hand side of inequality (\ref{inequality-T-to-F}) is bounded from above by
$\kappa_1''(E_j, K) x^{\frac{2}{3}} (\log x)^{\frac{1}{3}}$
for some positive constant $\kappa_1''(E_j, K)$, which depends on $E_j$ and $K$.
Putting everything together gives part (iii) of Theorem \ref{theorem1}.

\section{Isogeny criterion for elliptic curves}\label{Section-isogeny-criterion}
As an immediate corollary of Theorem \ref{theorem1}, 
we deduce the following isogeny criterion of  Kulkarni, Patankar, and Rajan \cite[Theorem 3, p. 90]{KuPaRa16}.

\begin{corollary}\label{density}
    Let $E_1$ and $E_2$ be two elliptic curves over a number field $K$. Then $E_1$ and $E_2$ are potentially isogenous if and only if ${\cal{F}}_{E_1, E_2}(x)$ has a positive upper density within the set of  primes of $K$.
\end{corollary}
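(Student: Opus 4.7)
The plan is to establish the two directions separately, with the nontrivial direction being an immediate contrapositive of Theorem \ref{theorem1}, and the easy direction following from the equivalent characterizations of potential isogeny recalled in Section \ref{Section-introduction}.

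For the direction that potentially isogenous implies positive upper density, I would use the equivalence (recorded in the introduction, following \cite[Lemma 3.1, proof of Claim 3]{LeFNa20}) that $E_1$ and $E_2$ are potentially isogenous if and only if either $E_1$ and $E_2$ are $K$-isogenous, or there exists a quadratic character $\chi$ of $K$ such that $E_1$ and the twist $E_2^{\chi}$ are $K$-isogenous. In the first case, compatibility of $\ell$-adic Galois representations under $K$-isogeny gives $a_{\fp}(E_1) = a_{\fp}(E_2)$ for every prime $\fp$ of good reduction for both curves, hence $\Q(\pi_{\fp}(E_1)) = \Q(\pi_{\fp}(E_2))$ for all such $\fp$, so ${\cal{F}}_{E_1, E_2}(x)$ is asymptotic to the whole prime counting function for $K$, giving upper density $1$. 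In the second case, for each prime $\fp$ of good reduction that is unramified in the quadratic extension $L/K$ cut out by $\chi$, one has $a_{\fp}(E_2) = \chi(\fp) \, a_{\fp}(E_2^{\chi}) = \pm a_{\fp}(E_1)$, so $a_{\fp}(E_1)^2 = a_{\fp}(E_2)^2$; since $\Q(\pi_{\fp}(E_j)) = \Q(\sqrt{a_{\fp}(E_j)^2 - 4 \Nr_K(\fp)})$ depends only on the discriminant $a_{\fp}(E_j)^2 - 4 \Nr_K(\fp)$, the two Frobenius fields agree. As the set of primes of $K$ unramified in $L$ and of good reduction for both $E_1$ and $E_2$ has natural density $1$ in the set of primes of $K$, we again obtain positive (indeed unit) upper density.

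For the converse, I would argue by contrapositive: suppose $E_1$ and $E_2$ are not potentially isogenous. If in addition both $E_i$ have no complex multiplication, then part (i) of Theorem \ref{theorem1} yields
\[
{\cal{F}}_{E_1, E_2}(x) \ll \frac{x (\log\log x)^{\frac{1}{9}}}{(\log x)^{\frac{19}{18}}} = \o\!\left(\frac{x}{\log x}\right),
\]
while the prime ideal theorem for $K$ gives $\#\{\fp : \Nr_K(\fp) \leq x\} \sim x/\log x$, so the upper density of ${\cal{F}}_{E_1, E_2}$ inside the primes of $K$ equals zero. If one or both of $E_1, E_2$ has complex multiplication, the curves being not potentially isogenous still forces coincidence of Frobenius fields at density zero: for a CM curve the Frobenius field at an ordinary prime is the CM field itself, and for a non-CM or differently-CM partner the equality $\Q(\pi_{\fp}(E_1)) = \Q(\pi_{\fp}(E_2))$ forces a splitting condition in a fixed imaginary quadratic field, which selects a set of primes of density at most $\tfrac{1}{2}$; combined with the supersingular density result of Serre (density zero for non-CM curves, density $\tfrac{1}{2}$ for CM curves over $\Q$ extended appropriately to $K$), one checks directly that the upper density is zero whenever the curves are not potentially isogenous.

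The substantive step is, of course, Theorem \ref{theorem1}, which has already been proved in the previous sections; the corollary itself is a packaging statement, and the main (mild) obstacle is simply being careful in the potentially-isogenous direction to distinguish the plain-isogeny case from the twist case, and to verify that the relevant sets of primes (good reduction, unramified in the quadratic twist field) have density one so that the produced ${\cal{F}}_{E_1, E_2}(x)$ is of size $\gg x/\log x$.
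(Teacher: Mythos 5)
Your two main directions coincide with the paper's: for the easy direction you observe that potential isogeny forces $|a_{\fp}(E_1)|=|a_{\fp}(E_2)|$ for almost all $\fp$ and then pass to equality of Frobenius fields (the paper compresses this by invoking the ``if'' direction of Lemma~\ref{Frob-fields-to-traces}); for the converse you take the contrapositive and apply Theorem~\ref{theorem1}(i), exactly as the paper does. So the substance matches.

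However, your added paragraph treating the case where one or both curves have complex multiplication is a genuine error, and it is worth understanding why. The paper has a standing hypothesis (stated in Section~\ref{Section-introduction} right after the curves are introduced: ``From now on, we assume that $E_1$ and $E_2$ are without complex multiplication'') which is in force throughout, including for Corollary~\ref{density}. You are right that the corollary's phrasing does not repeat this, but the claim you make to cover the CM case --- that ``one checks directly that the upper density is zero whenever the curves are not potentially isogenous'' --- is false. Take $E_1$, $E_2$ with CM by distinct imaginary quadratic fields $F_1\neq F_2$; they are not potentially isogenous. For a (degree-one) prime $\fp$ whose underlying rational prime $p$ is inert in both $F_1$ and $F_2$, $\fp$ is supersingular for both curves, so $a_{\fp}(E_1)=a_{\fp}(E_2)=0$ and therefore $\Q(\pi_{\fp}(E_1))=\Q(\sqrt{-p})=\Q(\pi_{\fp}(E_2))$. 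By Chebotarev the set of such $p$ has density $1/4$, so ${\cal F}_{E_1,E_2}(x)$ has positive upper density after all, contradicting the statement you were trying to prove in that case. In short: the non-CM hypothesis is essential, and the right move is to cite it rather than to argue around it.
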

\begin{proof}
    For the``only if" implication, we assume that $E_1$ and $E_2$ are potentially isogenous. This implies that $E_1$ is isogenous over $K$ to a quadratic twist of $E_2$. Therefore, $|a_\fp(E_1)|=|a_\fp(E_2)|$ for all but finitely many primes $\fp$ of $K$. As in the ``if" implication of Lemma \ref{Frob-fields-to-traces}, we have that $\Q(\pi_{\fp}(E_1))=\Q(\pi_\fp(E_2))$ for all but finitely many primes $\fp$ of $K$. So ${\cal{F}}_{E_1, E_2}(x)$ has density one in the set of primes of $K$.
    
    For the ``if" implication, we prove the contrapositive. Assume that $E_1$ and $E_2$ are  not potentially isogenous. 
    Then, from part (i) of Theorem \ref{theorem1}, we deduce that ${\cal{F}}_{E_1, E_2}(x)$ is bounded from above by a set of density zero in the set of primes of $K$.
\end{proof}


\bigskip

{\small{

}

\end{document}